\DeclarePairedDelimiter{\abs}{\lvert}{\rvert}
\newtheorem{theorem}{Theorem}
\newtheorem{assumption}[theorem]{Assumption}
\newtheorem{remark}{Remark}
\newtheorem{lemma}{Lemma}
\newtheorem{proposition}{Proposition}
\newtheorem{definition}{Definition}
\title{\vspace{-2cm}Dimensional reduction of dynamical systems on graphons}
\author[1]{Bisna Mary Eldo}
\author[2]{Sarbendu Rakshit}
\author[1,3]{Naoki Masuda}
\affil[1]{†Department of Mathematics, University of Michigan, Ann Arbor, 48109-1043, MI, USA.}
\affil[2]{Department of Sciences and Humanities (Stream of Mathematics), Indian Institute of Information Technology Design and Manufacturing, Kancheepuram, Chennai, Tamil Nadu 600127, India}
\affil[3]{§Gilbert S. Omenn Department of Computational Medicine and Bioinformatics, University of Michigan, Ann Arbor, 48109-2218, MI, USA}
\date{}
\begin{document}
	\maketitle
	\begin{abstract}
		Dynamical systems on networks are inherently high-dimensional unless the number of nodes is extremely small. Dimension reduction methods for dynamical systems on networks aim to find a substantially lower-dimensional system that preserves key properties of the original dynamics such as bifurcation structure. A class of such methods proposed in network science research entails finding a one- (or low-) dimensional system that a particular weighted average of the state variables of all nodes in the network approximately obeys. We formulate and mathematically analyze this dimension reduction technique for dynamical systems on dense graphons, or the limiting, infinite-dimensional object of a sequence of graphs with an increasing number of nodes. We first theoretically justify the continuum limit for a nonlinear dynamical system of our interest, and the existence and uniqueness of the solution of graphon dynamical systems. We then derive the reduced one-dimensional system on graphons and prove its convergence properties. Finally, we perform numerical simulations for various graphons and dynamical system models to assess the accuracy of the one-dimensional approximation.
	\end{abstract}
	
	\section{Introduction}
	
	Many complex systems are dynamical and networked in nature, with applications ranging from modeling and inference of epidemic spreading, collective oscillations and synchronization in biological systems, and cascading failures in infrastructure systems, to climate dynamics. Dynamical systems on networks are by definition high-dimensional unless there are only a small number of nodes, because each node of the network usually carries at least one state variable. A strategy to facilitate understanding such high-dimensional dynamics on networks is dimension reduction. Various dimension reduction techniques have been developed to reduce high-dimensional data in general, aiming to map the given data or system to a lower-dimensional space without losing much information \cite{hinton2006reducing, cunningham2015linear}. Dimension reduction methods tailored for dynamical systems have also been developed. These techniques aim to approximate dynamical systems in higher-dimensional space by an alternative, more tractable lower-dimensional dynamical systems with an acceptable accuracy \cite{ghadami2022data, lu2021review, rega2005dimension, hesthaven2022reduced, rowley2017model}. These methods ideally help us extract key properties of the network dynamics such as invariants, modular structure, and bifurcations.
	
	In this article, we consider two closely related dimension reduction techniques for dynamical systems on networks proposed in network science research. One is to approximate the original dynamical system by the dynamics of a single observable that is a weighted average of the state variables of all nodes in the network, proposed by Gao, Barzel, and Barab\'{a}si \cite{gao2016universal}, which we refer to as GBB reduction. The other method is a generalization of the GBB reduction, which is a type of mean-field theory, to the case of general adjacency matrices of the network \cite{laurence2019spectral}, which we refer to as spectral reduction (also called DART \cite{thibeault2020threefold}). The spectral reduction uses the eigenvalues and eigenvectors of the adjacency matrix to reduce the original dynamical system on networks to a lower-dimensional system in terms of weighted linear combinations of the state variable that are distinct from what the GBB reduction uses.
	
	We mathematically study the GBB and spectral reductions on large networks using graphons. A graphon is a continuum limit of graphs (i.e., networks) as the number of nodes tends to infinity \cite{lovasz2006limits,borgs2008convergent,borgs2018p,lovasz2012large}. Graphons are represented by a symmetric measurable function $W:[0,1]\times[0,1] \to [0,1]$, where $W(y, y')$ dictates the probability that two nodes parameterized by $y$ and $y'$ are adjacent to each other. Graphons are an emerging topic at the intersection of mathematics, statistics, network science, and some other fields. They have been used for modeling power network dynamics \cite{kuehn2019power}, epidemics spreading \cite{vizuete2020graphon}, synchronization of oscillators \cite{medvedev2014nonlinear, medvedev2020kuramoto}, random walks \cite{petit2021random}, and consensus formation \cite{bonnet2022consensus}, for example. The continuum limit of graph sequences provides a framework for the study of dynamical systems on massive graphs, where classical methods would become computationally intractable \cite{petit2021random}. In this article, we focus on the continuum limit of a class of nonlinear dynamical systems and its dimension reduction.
	
	The organization of this paper is as follows. In sections \ref{sec_2} and \ref{sec_3}, we study mathematical properties of graphon dynamical systems such as the existence, uniqueness and the boundedness of the solution. We also prove the convergence of the solution for the finite network to the corresponding continuum limit solution. These parts are mainly extensions of Ref.~\cite{medvedev2014nonlinear}, which dealt with a different class of graphon dynamical systems. 
	In section \ref{sec_4}, we define and theoretically and numerically study the GBB and spectral reductions of graphon dynamical systems.
	
	\section{Dynamics on finite networks and graphon dynamical systems}\label{sec_2}
	
	Various complex systems in the real world can be described by dynamical systems on networks~\cite{boccaletti2006complex, barrat2008dynamical, porter2016dynamical}. Dynamical systems on networks are necessarily high-dimensional because each node is assigned with one or more dynamical variables and these variables interact via edges of the given network. Even if the dynamics at each node is one-dimensional, the entire dynamical system on the network is $N$-dimensional, where $N$ is the number of nodes in the network.
	
	We consider the class of dynamical systems on networks given by
	\begin{equation}\label{eq_1}
		\dfrac{d x_i}{dt}=f(x_i)+\frac{1}{N}\sum\limits_{j=1}^NA_{ij}G(x_i,x_j),
	\end{equation}
	where $x_i(t) \in \mathbb{R}$ represents the dynamical state of the $i$th node at time $t$, $f(x_i) \in \mathbb{R}$ represents the single-node dynamics, $G(x_i,x_j) \in \mathbb{R}$ describes the interaction between the $i$th and $j$th nodes, and $A_{ij}$ $(\ge 0)$ is the strength of the influence of node $j$ on node $i$, corresponding to the weighted adjacency matrix of the network. Equation \eqref{eq_1} has been used for studying dimension reduction of dynamics on networks in network science research. They are the Gao-Barzel-Barab\'{a}si (GBB) reduction \cite{gao2016universal} and the spectral reduction \cite{laurence2019spectral, thibeault2020threefold}, including their various extensions \cite{tu2017collapse, kundu2022accuracy, masuda2022dimension, vegue2023dimension, tu2021dimensionality, salgado2024dimension}. Equation~\eqref{eq_1} is also used for studying the interplay between the structure and the dynamics of complex systems \cite{barzel2013universality}, early warning signals \cite{maclaren2023early,masuda2024anticipating, maclaren2024applicability}, signal propagation in complex networks \cite{hens2019spatiotemporal}, interplay between network motifs and dynamical stability \cite{gross2023dense}, and information flow \cite{harush2017dynamic}. Motivated by these previous studies, we study the continuum limit of this family of dynamical systems. 
	
	To state graphon dynamical systems corresponding to \eqref{eq_1},
	we start by introducing graphons. A graphon is a continuum limit of a growing sequence of finite networks (i.e., graphs). The growth of networks means that $N$ increases in sequence. In the graphon formalism, the nodes are placed in $I = [0,1]$. In intuitive terms, two nodes with latent variables $y, y' \in I$ are adjacent by an edge with probability $W(y, y')$. Every Cauchy sequence of graphs with an increasing $N$ has a limit object under the cut norm, known as graphon~\cite{lovasz2006limits, borgs2018p, borgs2019𝐿,petit2021random}. Precisely speaking, a graphon is a symmetric measurable function $W:[0,1]^2 \to [0,1]$. 
	The norm on $W$ is given by the usual $L^p$ norm and is denoted by $\left\|W\right\|_{p}$. See Refs.~\cite{lovasz2006limits,kaliuzhnyi2017semilinear,petit2021random,medvedev2014nonlinear} for theoretical frameworks of graphons as a limiting object of dense and sparse graph sequences.
	
	We will prove the convergence of \eqref{eq_1} and its one-dimensional approximation to the corresponding graphon dynamical systems as $N\to\infty$. To this end, we use step graphons \cite{petit2021random}. Consider a dynamical system on a finite network given by \eqref{eq_1}. The step graphon is a method to formulate an arbitrary finite network by graphons. Consider a uniform partition $\mathcal{P}_{N} = \{ I_{N,i} \}_{i=1, \ldots, N}$ of $I$, where $I_{N,i} = \left[\frac{i-1}{N}, \frac{i}{N}\right)$ for $i\in \{1, \ldots, N-1\}$, and $I_{N,N} = \left[\frac{N-1}{N}, 1 \right]$. Each element in $\mathcal{P}_{N}$ represents a node in a network having $N$ nodes. Let us generate the step graphon version of the adjacency matrix $A$, and then \eqref{eq_1}. As in Ref.~\cite{petit2021random}, we define a map $\eta:\mathbb{M}_N \to \mathbb{W}$, where $\mathbb{M}_N$ is the space of $N \times N$ matrices of which all entries are in $[0, 1]$ \cite{lovasz2012large, tripathi2024stochastic, borgs2008convergent}, and $\mathbb{W}$ is the space of all graphons. The map represents the step graphon and is defined by $\eta(A)(y,y')=\sum_{i=1}^N \sum_{j=1}^N A_{ij}\chi_{I_{N,i}}(y)\chi_{I_{N,j}}(y')$, where $A$ is the adjacency matrix with entries in $\left[0,1\right]$, and $\chi_{I_{N,i}}$ is the indicator function, i.e.,
	%
	\begin{equation}\label{eq_a1}
		\chi_{I_{N,i}}(y) =
		\begin{cases}
			1 & \text{if } y\in I_{N,i},\\
			0 & \text{if } y\not\in I_{N,i}.
		\end{cases}
	\end{equation}
	
	To write \eqref{eq_1} in terms of the step graphon, we consider any vector ${\bf u}=(u_1, \ldots, u_N) \in \mathbb{R}^N$ and define a map $\tilde{\eta}$ from ${\bf u}$ to a piecewise constant function on $[0,1]$ by
	\begin{equation}\label{eq_7}
		\tilde{\eta}({\bf u})(y):=\sum_{i=1}^N u_i\chi_{I_{N,i}}(y).
	\end{equation}
	Using \eqref{eq_7}, we obtain
	\begin{equation}\label{eq_8}
		\tilde{\eta}(f(\pmb{x}))(y)=\sum\limits_{i=1}^N f(x_i)\chi_{I_{N,i}}(y),
	\end{equation}
	where $\pmb{x}= (x_1, \ldots, x_N)^\top$, $f(\pmb{x}) = \left( f(x_1), \ldots,  f(x_N)\right)^\top$, and ${}^{\top}$ represents the transposition.
	
	For any $y \in [0, 1]$, we obtain $y\in I_{N, \ell}$ for a particular $\ell$. This implies that $\chi_{I_{N, i}}(y)=1$ if $i=\ell$, and $\chi_{I_{N,i}}(y)=0$ if $i\ne \ell$. Using this $\ell$, one can rewrite \eqref{eq_8} as $\tilde{\eta}(f(\pmb{x}))(y)=f(x_{\ell})$, which yields $\tilde{\eta}(f(\pmb{x}))(y)=f(\tilde{\eta}(\pmb{x})(y))$. 
	Using this relationship and the unique values of $i$ and $j$ satisfying $y \in I_{N, i}$ and $y' \in I_{N, j}$ for any given $y, y' \in [0, 1]$, we obtain from \eqref{eq_1}
	\begin{equation}
		\dfrac{\partial\,x(y,t)}{\partial t}=f(x(y,t))+\int_{I}\eta(A)(y,y')\,G(x(y,t),x(y',t)) \mathrm{d}y'.
		\label{eq:pre-graphon-dynamical-system}
	\end{equation}
	Note that we use $x$ to represent the dynamical state of the nodes in the cases of both finite discrete networks and graphons. This convention should not arise confusion because we use subscripts in the former case (e.g., $x_i$) and an argument taking the value in a unit interval (e.g., $x(y)$, where $y\in [0, 1]$) in the latter case.
	
	Now we introduce a general graphon dynamical system (see e.g.\,Refs.~\cite{gao2018graphon, gao2019graphon, petit2021random}). We define
	\begin{equation}\label{eq4}
		\frac{\partial\,x(y,t)}{\partial t}=f(x(y,t))+\int_0^1W(y,y')\,G(x(y,t),x(y',t))\,\mathrm{d}y',
	\end{equation}
	where $W$ in \eqref{eq4} is any graphon. Equation~\eqref{eq4} is different from~\eqref{eq:pre-graphon-dynamical-system}, in which $\eta(A)$ is confined to be the step graphon induced by an $N\times N$ adjacency matrix.
	
	Conversely, the discrete-network variant of the graphon dynamical system derived from \eqref{eq4} is given by
	\eqref{eq_1} \cite{borgs2008convergent, petit2021random}, or equivalently \eqref{eq:pre-graphon-dynamical-system}, where
	\begin{equation}
		A_{ij} = (W_N)_{ij} \equiv N^2\int_{I_{N,i}\times I_{N,j}}W(y,y')\mathrm{d}y\mathrm{d}y' \in [0, 1].
		\label{eq:Aij-from-W}
	\end{equation}
	
	We have two main mathematical goals with this article. First, we prove that the solution of the discrete dynamical system converges to the graphon dynamical system as $N \to \infty$, i.e., \eqref{eq:pre-graphon-dynamical-system} combined with \eqref{eq:Aij-from-W} converges to \eqref{eq4} as $N \to \infty$. We achieve this by mainly using the results in a previous study \cite{medvedev2014nonlinear}. Second, we develop graphon versions of the GBB and spectral reduction methods, approximating the solutions of \eqref{eq4}, and prove the convergence for the GBB reduction. Then, we numerically examine the accuracy of the two dimension reduction methods for graphon dynamical systems.
	
	\section{Continuum limit}\label{sec_3}
	
	In this section, we mathematically establish the graphon continuum limit of 
	\eqref{eq_1}. The formalism and results in this section are largely similar to those in Ref.~\cite{kaliuzhnyi2017semilinear}, which established such a limit and proved various properties of the limit. Therefore, we explicitly mention how each of our formalisms and proofs are similar to or different from theirs. In Ref.~\cite{kaliuzhnyi2017semilinear}, sparse networks induced by the graphon are considered. However, we focus on dense networks in this paper. This is because $f$ does not generally commute with $\mathbb{E}$, the expectation with respect to the random ensemble of sparse graphs. In addition, $\mathbb{E}[A_{ij} G (x_i, x_j)]$, where $A_{ij}$ is a random variable, and therefore $x_i$ and $x_j$ are as well, is not equal to $\mathbb{E}[A_{ij}] \mathbb{E}[G(x_i, x_j)]$ in general due to correlation between $A_{ij}$ and $\{x_1, \ldots, x_N\}$, which may cause difficulty. We assume the following properties \cite{kaliuzhnyi2017semilinear}:
	\begin{assumption}[Lipschitz continuity]\label{assumption_1}
		First, the self-dynamics $f: \Omega \to \mathbb{R}$ is a Lipschitz continuous function on the closed interval $\Omega \subset \mathbb{R}$ with Lipschitz constant $L_f$, i.e., $|f(x) - f(x')| \leq L_f |x-x'|$ for all $ x, x' \in \Omega$.
		Second, the coupling function $G: \Omega \times \Omega \to \mathbb{R}$ is a Lipschitz continuous function with Lipschitz constant $L_G$, i.e., $\left|G(x_i, x_j) - G(x_i', x_j')\right| \leq L_G\left\|(x_i, x_j) - (x_i', x_j')\right\|_{2}$ for all $(x_i, x_j), ~ (x_i', x_j') \in \Omega \times \Omega$.
	\end{assumption}
	\begin{assumption}[Positively invariant domain]\label{assumption_2}
		The dynamical system \eqref{eq4} admits an invariant set $S \subset L^q(I)$ defined by $S = \{\pmb{x} \in L^q(I) | x(y, t) \in \Omega~ \text{for almost every} ~ y \in I\}$. That is, if $\pmb{x}(0) \in S$, then $\pmb{x}(t) \in S$ for all $t \geq 0$.
	\end{assumption}
	\begin{assumption}[W-1]\label{assumption_3}
		Let $W: I^2 \to I$ is a continuous function almost everywhere on $I^2$. 
	\end{assumption}
	
	\subsection{Existence, uniqueness, and boundedness of the solution}
	
	\begin{theorem}[Existence and uniqueness of the solution]\label{theorem_1}
		Suppose that $W \in L^p(I^2)$ with $p \geq 2$. Then, the initial value problem (IVP) for \eqref{eq4} with initial condition $\pmb{x}(0) = \mathbf{g}\in S \subset L^{q}(I), ~ q = \frac{p}{p-1}$ has a unique solution $\pmb{x}\in C^1(L^q(I); [0, T])$ that satisfies $\pmb{x}(\cdot,t)\in S$ for all $t\geq 0$ and continuously depends on $\mathbf{g}$. 
	\end{theorem}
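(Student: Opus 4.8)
The plan is to recast the IVP for \eqref{eq4} as an abstract ordinary differential equation $\dot{u}=F(u)$ in the Banach space $X=L^q(I)$, where $u(t)=x(\cdot,t)$ and the vector field $F\colon L^q(I)\to L^q(I)$ is given by
\begin{equation*}
  F(u)(y)=f(u(y))+\int_0^1 W(y,y')\,G(u(y),u(y'))\,dy'.
\end{equation*}
Once $F$ is shown to map $L^q(I)$ into itself and to be globally Lipschitz there, the Picard--Lindel\"of (Cauchy--Lipschitz) theorem in Banach spaces yields a unique global solution $u\in C^1([0,\infty);L^q(I))$, and Gr\"onwall's inequality gives the continuous dependence on $\mathbf{g}$. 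Equivalently, one may apply the Banach fixed point theorem to the integral reformulation $u(t)=\mathbf{g}+\int_0^t F(u(s))\,ds$ on $C([0,T];L^q(I))$ for small $T$ and then patch local solutions together; I will follow the abstract-ODE route because the global Lipschitz bound below makes continuation unnecessary.

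The analytic heart of the argument is the pair of estimates that make $F$ well-defined and Lipschitz. For the self-dynamics term, Assumption~\ref{assumption_1} gives $\|f(u)-f(v)\|_{L^q}\le L_f\|u-v\|_{L^q}$ and, since $I$ has unit measure, $\|f(u)\|_{L^q}\le |f(0)|+L_f\|u\|_{L^q}<\infty$. For the coupling term I would bound the increment using Assumption~\ref{assumption_2},
\begin{equation*}
  |G(u(y),u(y'))-G(v(y),v(y'))|\le L_G\bigl(|u(y)-v(y)|+|u(y')-v(y')|\bigr),
\end{equation*}
which separates into an off-diagonal contribution governed by the integral operator $(T_W\phi)(y)=\int_0^1 W(y,y')\phi(y')\,dy'$ and a diagonal contribution $L_G\,|u(y)-v(y)|\,w(y)$ with $w(y)=\int_0^1 W(y,y')\,dy'$. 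The off-diagonal piece is controlled by H\"older's inequality with the conjugate pair $(p,q)$, namely $|T_W\phi(y)|\le \|W(y,\cdot)\|_{L^p}\|\phi\|_{L^q}$, and because $q\le p$ (as $p\ge 2$) the nesting of $L^r$ spaces on the unit-measure interval yields $\|T_W\phi\|_{L^q}\le \|W\|_{L^p(I^2)}\|\phi\|_{L^q}$.

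The step I expect to be the main obstacle is the diagonal contribution, in which the ``local'' first argument $u(y)$ of $G$ factors out of the $y'$-integral and leaves the weight $w$ multiplying $|u(y)-v(y)|$. Controlling $\|(u-v)\,w\|_{L^q}$ by $\|u-v\|_{L^q}$ requires $w\in L^\infty(I)$ rather than merely $w\in L^p(I)$, so a pure $L^p$ kernel would not suffice on its own; here I would invoke that the graphon $W$ takes values in $[0,1]$, whence $w(y)\le 1$ almost everywhere and $\|(u-v)w\|_{L^q}\le\|u-v\|_{L^q}$. Combining the three bounds produces a solution-independent global Lipschitz constant of the form $\Lambda=L_f+L_G\bigl(1+\|W\|_{L^p(I^2)}\bigr)$, which is exactly what the abstract existence--uniqueness theorem requires and what drives the estimate $\|u(t)-\tilde u(t)\|_{L^q}\le e^{\Lambda t}\|\mathbf{g}-\tilde{\mathbf{g}}\|_{L^q}$ for continuous dependence on the initial datum. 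Finally, the $C^1$-in-time regularity follows from the integral equation once $t\mapsto F(u(t))$ is seen to be continuous, which is immediate from the Lipschitz continuity of $F$ and the continuity of $u$.
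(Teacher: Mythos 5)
Your proposal is correct, and its analytic core is the same as the paper's: both arguments come down to two estimates, the H\"older bound $\bigl\|\int_0^1 W(\cdot,y')\phi(y')\,dy'\bigr\|_{L^q(I)}\le\|W\|_{L^p(I^2)}\|\phi\|_{L^q(I)}$ (using $q\le p$ and the nesting of $L^r$ norms on a unit-measure domain) for the off-diagonal part of the coupling, and an $L^\infty$ bound on the row integral $w(y)=\int_0^1 W(y,y')\,dy'$ for the diagonal part. The difference is in the packaging. The paper runs the Banach fixed-point argument directly on the integral operator $K$ acting on $C(L^q(I);[0,\tau])$ with $\tau$ chosen small, so it produces a local-in-time solution and leaves both the continuation to arbitrary times and the continuous dependence on $\mathbf{g}$ implicit; you instead verify that the vector field $F$ is \emph{globally} Lipschitz on $L^q(I)$ with constant $\Lambda=L_f+L_G\bigl(1+\|W\|_{L^p(I^2)}\bigr)$ and invoke the abstract Cauchy--Lipschitz theorem, which yields the global $C^1$ solution in one step and makes the continuous dependence explicit via Gr\"onwall, $\|u(t)-\tilde u(t)\|_{L^q}\le e^{\Lambda t}\|\mathbf{g}-\tilde{\mathbf{g}}\|_{L^q}$. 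One point where your write-up is genuinely sharper: in \eqref{eq_4f} the paper bounds the diagonal term $\bigl\|\int_I W(\cdot,y')\,|x_1(\cdot,t)-x_2(\cdot,t)|\,dy'\bigr\|_{L^q(I)}$ by $\|W\|_{L^p(I^2)}\|x_1(t)-x_2(t)\|_{L^q(I)}$ ``similarly'' to the off-diagonal term, but this does not follow from H\"older alone, since here the weight $w(y)$ multiplies the same variable that carries the norm; the needed constant is $\text{ess sup}_y\, w(y)$, which can exceed $\|W\|_{L^p(I^2)}$ (e.g.\ for $W$ equal to $1$ on a thin band and $0$ elsewhere). Your observation that one must use $0\le W\le1$, hence $w\le 1$ a.e., to get $\|(u-v)\,w\|_{L^q}\le\|u-v\|_{L^q}$ is exactly the right fix; it only changes the paper's contraction constant from $L\tau\bigl(2\|W\|_{L^p(I^2)}+1\bigr)$ to $L\tau\bigl(\|W\|_{L^p(I^2)}+2\bigr)$, which is harmless for the conclusion.
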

	
	This theorem is an adaptation of Theorem 3.1 in Ref.~\cite{kaliuzhnyi2017semilinear}; the following proof is similar to theirs. However,
	in contrast to their theorem and proof, we do not assume sparsity of the network, and we consider the Lipschitz continuity in a bounded domain.
	
	Since $f$ and $G$ are Lipschitz on the bounded sets $\Omega$ and $\Omega \times \Omega$, respectively, we extend them to globally Lipschitz functions on $\mathbb{R}$ and $\mathbb{R}^2$ and still denote the them by $f$ and $G$ for the purpose of the proof. We first solve the extended system in $C(L^q(I); [0, \tau])$ by the contraction mapping theorem. Assumption \ref{assumption_2} implies that, if $\pmb{g} \in S$, then the corresponding solution satisfies $\pmb{x}(\cdot, t) \in S$ for all $t \geq 0$. Therefore, the solution of the extended system coincides with the solution of the original system on $S$.
	
	\begin{proof}[Proof of Theorem \ref{theorem_1}]
		Let $L=\max\{L_f,L_G\}$ and $\mathcal{M}=C(L^q(I; \mathbb{R}); [0, \tau])$. Define $K:\mathcal{M}\to \mathcal{M}$ by
		\begin{equation}\label{eq_b}
			[K \pmb{x}](t)=\mathbf{g}+\int_0^t\left[ f(x(y,s)) + \int_IW(\cdot,y')G(x(y,s), x(y',s))\,\mathrm{d}y' \right]\,\mathrm{d}s.
		\end{equation}
		We rewrite the IVP \eqref{eq4} as a fixed point equation for the mapping $K$ as
		\begin{equation}\label{eq_a}
			\pmb{x}=K\,\pmb{x},
		\end{equation}
		and show that $K$ is a contraction on $\mathcal{M}$.
		We know that both $I^2$ and $I$ are of finite measure, and that $W$ is a measurable function from $I^2$ to $I$. Therefore, for any $x\in L^q(I)$ and $W\in L^p(I^2)$, where $p>1$ and $q=\frac{p}{p-1}$, we obtain
		\begin{equation}\label{eq_38}
			\left\|x\right\|_{L^q(I)}\le\left\|x\right\|_{L^{p\lor q}(I)}~\mbox{and}~~\left\|W\right\|_{L^p(I^2)}\le\left\|W\right\|_{L^{p\lor q}(I^2)},
		\end{equation}
		where $p\lor q$ represents the larger of $p$ and $q$.
		By letting $L = \max \{{L_f, L_G }\}$ and
		\begin{equation}\label{eq:norm}
			\left\|K\pmb{x}\right\|_\mathcal{M} = \max\limits_{t\in[0,\tau]} \, \left\|K x(\cdot,t)\right\|_{L^q(I)},
		\end{equation}
		we obtain
		\begin{align}\label{eq_4d}
			& \left\|K\pmb{x}_1-K\pmb{x}_2\right\|_\mathcal{M}\nonumber\\
			=& \max\limits_{t\in[0,\tau]}\left\|Kx_1(t)-Kx_2(t)\right\|_{L^q(I)}\nonumber\\
			\leq& \max\limits_{t\in[0,\tau]}\left\|\int_0^t\left[L\,\abs{x_1(\cdot,s)-x_2(\cdot,s)}+\int_IW(\cdot, y')\abs{G(x_1(\cdot,s), x_1(y',s))-G(x_2(\cdot,s), x_2(y',s))}\,\mathrm{d}y'\right]\mathrm{d}s\right\|_{L^q(I)} \nonumber \\
			\leq& L\max\limits_{t\in[0,\tau]}\left\|\int_0^t\left[\abs{x_1(\cdot,s)-x_2(\cdot,s)}+\int_IW(\cdot,y')\left\|\left(x_1(\cdot,s), x_1(y',s)\right) - \left(x_2(\cdot,s), x_2(y',s)\right)\right\|_{2}\,\mathrm{d}y'\right]\mathrm{d}s\right\|_{L^q(I)}\nonumber\\
			\leq& L\max\limits_{t\in[0,\tau]}\left\{\left\|x_1(\cdot,t)-x_2(\cdot,t)\right\|_{L^q(I)}+\left\|\int_IW(\cdot,y')\abs{x_1(\cdot,t)-x_2(\cdot,t)}\,\mathrm{d}y'\right\|_{L^q(I)} \right. \nonumber\\
			& \left. + \left\|\int_IW(\cdot,y')\abs{x_1(y',t)-x_2(y',t)}\,\mathrm{d}y'\right\|_{L^q(I)}\right\}.
		\end{align}
		We evaluate the last term on the right-hand side of \eqref{eq_4d} as follows:
		\begin{align}\label{eq_4e}
			\left\|\int_IW(\cdot,y')\abs{x_1(y',t)-x_2(y',t)}\,\mathrm{d}y'\right\|_{L^q(I)}&\leq \left\|\int_IW(\cdot,y')\mathrm{d}y' \, \left\|x_{1}(\cdot,t)  - x_{2}(\cdot,t)\right\|_{L^q(I)}\right\|_{L^q(I)}\nonumber\\
			& = \left\|\int_IW(y',\cdot)\mathrm{d}y' \, \left\|x_{1}(\cdot,t)  - x_{2}(\cdot,t)\right\|_{L^q(I)}\right\|_{L^q(I)}\nonumber\\
			& = \left\|\int_IW(y,\cdot)\mathrm{d}y \, \left\|x_{1}(\cdot,t)  - x_{2}(\cdot,t)\right\|_{L^q(I)}\right\|_{L^q(I)}\nonumber\\
			&\le \left\|\,\left\|W(y,\cdot)\right\|_{L^p(I)}\, \cdot \left\|x_1(\cdot,t)-x_2(\cdot,t)\right\|_{L^q(I)}\right\|_{L^q(I)}\nonumber\\
			&\le \left\|\,\left\|W(y,\cdot)\right\|_{L^p(I)}\right\|_{L^q(I)}\, \cdot \left\|\, \left\|x_1(\cdot,t)-x_2(\cdot,t)\right\|_{L^q(I)}\right\|_{L^q(I)}\nonumber\\
			&\le\left\|W\right\|_{L^p(I^2)}\, \cdot \left\|\pmb{x}_1(t)-\pmb{x}_2(t)\right\|_{L^q(I)} \nonumber \\ 
			&\leq \left\|\pmb{x}_1(t)-\pmb{x}_2(t)\right\|_{L^q(I)}
		\end{align}
		Similarly, we obtain
		\begin{equation}\label{eq_4f}
			\left\|\int_IW(\cdot, y')\left|x_1(\cdot,t)-x_2(\cdot,t)\right|\,\mathrm{d}y'\right\|_{L^q(I)} \leq \left\|W\right\|_{L^p(I^2)}\,\left\|\pmb{x}_1(t)- \pmb{x}_2(t)\right\|_{L^q(I)} \leq \left\|\pmb{x}_1(t)- \pmb{x}_2(t)\right\|_{L^q(I)} .
		\end{equation}
		By substituting \eqref{eq_4e} and \eqref{eq_4f} in \eqref{eq_4d}, we obtain
		\begin{equation}
			\left\|K\pmb{x}_1-K\pmb{x}_2\right\|_\mathcal{M}\le 3L\tau\left\|\pmb{x_1}- \pmb{x_2}\right\|_\mathcal{M}.
		\end{equation}
		By setting $\tau=\dfrac{1}{2L\left(2\left\|W\right\|_{L^p(I^2)}+1\right)}$, we obtain $\left\|K\pmb{x}_1-K\pmb{x}_2\right\|_\mathcal{M}\le \frac{1}{2}\left\|\pmb{x_1}- \pmb{x_2}\right\|_\mathcal{M}$. Therefore, $K$ is a contraction on $\mathcal{M}$.
		
		Finally, to show that $K(\mathcal{M})\subset \mathcal{M}$, we define $z(y, t) := 0$, $\forall (y, t) \in I\times[0,\tau]$. We obtain
		\begin{align}\label{eq_used-for-th3.8}
			\left\|K\pmb{x}\right\|_\mathcal{M}&\le\left\|K\pmb{x}-K\pmb{z}\right\|_\mathcal{M}+\left\|K\pmb{z}\right\|_\mathcal{M}\nonumber\\
			&\le\frac{1}{2}\left\|\pmb{x}-\pmb{z}\right\|_\mathcal{M}+\left\|K\pmb{z}\right\|_\mathcal{M}\nonumber\\
			&=\frac{1}{2}\left\|\pmb{x}\right\|_\mathcal{M}+\left\|K\pmb{z}\right\|_\mathcal{M}.
		\end{align}
		By substituting $z(y, t)=c$, where $c \in \Omega$ in \eqref{eq_b}, we obtain
		\begin{equation}
			[K \pmb{z}](t) = \mathbf{g}+t(f(c) + s(y) G(c, c)),
			\label{eq:Kz-eval}
		\end{equation}
		where 		
		\begin{equation}
			s(y) := \int_0^1 W(y, y') \mathrm{d}y'.
			\label{eq:def-s(y)}
		\end{equation}
		Because $0\leq s(y) \leq 1$, it is guaranteed that $K\pmb{z} \in {\mathcal{M}}$. Combination of \eqref{eq_used-for-th3.8} and $K\pmb{z} \in {\mathcal{M}}$ implies that $K\pmb{x}\in \mathcal{M}$, i.e., $K(\mathcal{M})\subset \mathcal{M}$.
		
		Because $K$ is contracting and $K(\mathcal{M})\subset \mathcal{M}$, by the Banach contraction mapping principle, there exists a unique solution for the IVP \eqref{eq4}, $\pmb{x}\in \mathcal{M}\subset C(L^q(I); [0,\tau])$.
		
		Finally, since the initial condition satisfies $g(y) \in \Omega$ for almost every $y \in I$, we have $\pmb{g} \in S$. By Assumption \ref{assumption_2}, the corresponding solution satisfies $\pmb{x}(\cdot, t) \in S$ for all $t \geq 0$. Equivalently, $x(y, t) \in \Omega$ for almost every $y \in I$ and all $t \geq 0$. Therefore, although the proof uses globally Lipschitz extensions of $f$ and $G$, the solution remains in $S$. Hence, along this solution, the extended system coincides with the original graphon dynamical system.
	\end{proof}
	
	\begin{remark}\label{corol_1}
		Let $\Omega = [a,b] \subset \mathbb{R}$ be the bounded interval associated with the invariant domain $S \subset L^q(I)$ for the graphon dynamical system. Suppose that the initial condition satisfies $g(y) \in \Omega$ for almost every $y \in I$. Then, the solution obtained in Theorem \ref{theorem_1} is uniformly bounded on every finite time interval. More precisely, for any $T>0$,
		\begin{equation}
			\sup_{t\in[0,T]} ||x||_{L^{\infty}(I)} \leq \max\{|a|,|b|\}.
		\end{equation}
		In particular, the bound is independent of $T$ and $W$, provided that the invariant domain $S$ is fixed.
	\end{remark}
	
	\subsection{Convergence}\label{sec_covergence}
	
	In this section, we prove the convergence of the dynamical system on finite networks  induced by a graphon (i.e., 
	\eqref{eq:pre-graphon-dynamical-system} supplied with \eqref{eq:Aij-from-W}) to that on the graphon (i.e., \eqref{eq4}). We remind that $\{x_1(t), \ldots, x_N(t)\}$ is the solution of \eqref{eq_1} and $x(y,t)$ is the solution of \eqref{eq4}. 
	\begin{lemma}\label{lemma_1}
		Let $\pmb{x}_N$ and $\pmb{x}$ be the solution of the IVP \eqref{eq:pre-graphon-dynamical-system} and \eqref{eq4} with the initial condition $\pmb{x}_{N}(0) = \mathbf{g}_{N} = \left(g_{N1},\ldots,g_{NN} \right)$ and $\pmb{x}(0) = \mathbf{g}$, respectively, where $g_{Ni}= N\int_{I_{N,i}}g(y)\mathrm{d}y$ and $\mathbf{g} \in L^\infty(I)$. We assume that $g(y) \in \Omega$ and $g_{Ni} \in \Omega$. Then,
		\begin{equation}
			\sup_{t\in[0,T]}\left\|\pmb{x}_{N} - \pmb{x}\right\|_{C(L^2(I), [0,T])} \leq \left(\left\|\mathbf{g}_{N} - \mathbf{g}\right\|_{L^2(I)} + C_{1}\left\|W_{N} - W\right\|_{L^2(I^2)}T\right) e^{3LT},
			\label{eq:convergence-lemma-statement}			
		\end{equation}
		where  $C_1$ is a positive constant independent of $N,~y,~y'$ and $t$.
	\end{lemma}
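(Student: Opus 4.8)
The plan is to run a Gronwall-type argument on the difference of the two mild (integral) formulations of the dynamics. Writing both solutions through the integral map $K$ of \eqref{eq_b}, I would set $e(y,t) := x_N(y,t) - x(y,t)$ and subtract the integral equation for $\pmb{x}$ (with graphon $W$) from that for $\pmb{x}_N$ (with the step graphon $W_N = \eta(A)$), obtaining
\begin{align*}
e(y,t) &= \bigl(\mathbf{g}_N(y) - \mathbf{g}(y)\bigr) + \int_0^t \bigl[f(x_N(y,s)) - f(x(y,s))\bigr]\,ds \\
&\quad + \int_0^t\!\!\int_I \bigl[W_N(y,y')\,G(x_N(y,s),x_N(y',s)) - W(y,y')\,G(x(y,s),x(y',s))\bigr]\,dy'\,ds.
\end{align*}
Taking $L^2(I)$ norms and using Minkowski's integral inequality, the goal is to bound $\phi(t) := \left\|e(\cdot,t)\right\|_{L^2(I)}$ by the expression asserted in \eqref{eq:convergence-lemma-statement}.

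The decisive step is to split the coupling integrand by adding and subtracting a cross term,
\[
W_N\,G(x_N,x_N') - W\,G(x,x') = W_N\bigl[G(x_N,x_N') - G(x,x')\bigr] + (W_N - W)\,G(x,x').
\]
For the first piece I would invoke Assumption \ref{assumption_2} to bound the increment of $G$ pointwise by $L_G\bigl(|e(y,s)| + |e(y',s)|\bigr)$, use $0 \le W_N \le 1$ (so that $\left\|W_N\right\|_{L^2(I^2)} \le 1$), and then reproduce the two $L^2(I)$ estimates \eqref{eq_4e} and \eqref{eq_4f} from the proof of Theorem \ref{theorem_1} to control the resulting terms by $L_G\,\phi(s)$ each. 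Together with the self-dynamics term, bounded by $L_f\,\phi(s)$ via Assumption \ref{assumption_1}, these three contributions produce the Gronwall coefficient $L_f + 2L_G \le 3L$ with $L = \max\{L_f, L_G\}$.

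The heart of the matter, and the step I expect to be the main obstacle, is the second piece $(W_N - W)\,G(x,x')$, which carries the dependence on $\left\|W_N - W\right\|_{L^2(I^2)}$. Since $G(x(y,s),x(y',s))$ is not globally bounded, I would first appeal to Theorem \ref{theorem_2} to confine $\pmb{x}$ to a fixed ball in $L^\infty(I)$ uniformly on $[0,T]$ --- crucially with a bound independent of $W$, hence of $N$ --- so that $|G(x(y,s),x(y',s))|$ is dominated by a constant $C_1$ on the relevant region. A Cauchy--Schwarz estimate using $|I| = 1$ then gives
\[
\left\|\int_I \bigl(W_N(\cdot,y') - W(\cdot,y')\bigr)\,G(x(\cdot,s),x(y',s))\,dy'\right\|_{L^2(I)} \le C_1\left\|W_N - W\right\|_{L^2(I^2)}.
\]
The principal care here is to ensure that $\pmb{x}_N$ is likewise uniformly bounded on $[0,T]$ with $N$-independent constants, via the discrete analogue of Theorem \ref{theorem_2}, so that all constants above are genuinely independent of $N$.

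Collecting the estimates yields the integral inequality
\[
\phi(t) \le \left\|\mathbf{g}_N - \mathbf{g}\right\|_{L^2(I)} + C_1\left\|W_N - W\right\|_{L^2(I^2)}\,t + 3L\int_0^t \phi(s)\,ds,
\]
whose inhomogeneous term is nondecreasing in $t$; Gronwall's inequality then delivers the bound $\bigl(\left\|\mathbf{g}_N - \mathbf{g}\right\|_{L^2(I)} + C_1\left\|W_N - W\right\|_{L^2(I^2)}\,T\bigr)e^{3LT}$, which, since the right-hand side is increasing in $t$ so that the supremum over $[0,T]$ is attained at $t = T$, is exactly \eqref{eq:convergence-lemma-statement}.
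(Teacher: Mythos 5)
Your proposal is correct, and it rests on exactly the same key decomposition as the paper's proof: adding and subtracting the cross term so that the coupling difference splits into $W_N\bigl[G(x_N,x_N')-G(x,x')\bigr]$ (handled by the Lipschitz assumption and $0\le W_N\le 1$, yielding the $2L_G$ contribution) plus $(W_N-W)\,G(x,x')$ (handled by Cauchy--Schwarz and the essential bound $C_1$ on $G$ along the graphon solution), followed by Gronwall with rate $3L$. The difference is purely in the technical vehicle. The paper works with the differential form: it multiplies the pointwise inequality by $\left|\xi_N(y,t)\right|$, integrates over $I$ to get an energy estimate for $\tfrac{d}{dt}\left\|\xi_N(t)\right\|_{L^2(I)}^2$, and then must introduce the regularization $\phi_\epsilon(t)=\sqrt{\left\|\xi_N(t)\right\|_{L^2(I)}^2+\epsilon}$ to divide by the norm (avoiding non-differentiability at zero) before applying the differential Gronwall inequality and letting $\epsilon\to 0$. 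You instead work with the mild (integral) formulation, apply Minkowski's integral inequality to pass the $L^2(I)$ norm inside the time integral, and invoke the integral form of Gronwall with a nondecreasing inhomogeneous term; this avoids the $\epsilon$-regularization entirely and is arguably the cleaner of the two routes. One small remark: the caution you flag about needing $\pmb{x}_N$ to be uniformly bounded in $N$ is unnecessary. In your (and the paper's) decomposition, the factor $(W_N-W)$ multiplies $G$ evaluated only along the graphon solution $x$, so only the $W$-independent bound of Theorem \ref{theorem_2} on $\pmb{x}$ is needed to define $C_1$; the $x_N$-dependence sits entirely inside the Lipschitz difference term, which requires no boundedness. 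This is precisely how the paper defines $C_1$, as an upper bound for ${\rm ess}\sup_{(y,y',t)}\left|G(x(y,t),x(y',t))\right|$.
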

	This lemma is analogous to Theorem 4.1 in Ref.~\cite{medvedev2014nonlinear} including the proof. Therefore, we show the proof in Appendix~\ref{sec:proof-lemma-convergence}.	
	
	\begin{theorem} \label{thm:convergence}
		Let $x_N(y, t) = \sum_{i=1}^N x_i(t) \chi_{I_{Ni}}(y)$ and $x(y,t)$ and be the solution of \eqref{eq:pre-graphon-dynamical-system} and \eqref{eq4}, respectively. Suppose that the initial condition for IVPs \eqref{eq:pre-graphon-dynamical-system} and \eqref{eq4} satisfy 
		\begin{equation}\label{eq_16d}
			\lim_{N\to\infty} \left\| x_{N}(\cdot, 0) - x(\cdot, 0) \right\|_{C(L^2(I),[0,T])} = 0.
		\end{equation}
		Then, it holds true for any fixed $T > 0$ that
		\begin{equation}
			\lim_{N\to\infty} \max_{t \in [0,T]} \left\| x_{N}(\cdot, t) - x(\cdot, t) \right\|_{C(L^2(I),[0,T])} = 0.
		\end{equation}
		
	\end{theorem}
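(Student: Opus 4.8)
The plan is to obtain Theorem~\ref{thm:convergence} as an almost immediate consequence of the quantitative estimate in Lemma~\ref{lemma_1}. First I would identify the discrete solution $\pmb{x}_N(t)$ with its step-function embedding $x_N(y,t)=\sum_{i=1}^N x_i(t)\chi_{I_{Ni}}(y)$, so that $\|x_N(\cdot,t)-x(\cdot,t)\|_{L^2(I)}$ is exactly the quantity controlled by the lemma, and recall that the finite-network system \eqref{eq:pre-graphon-dynamical-system} is driven by the sampled graphon $W_N=\eta(A)$ with entries given by \eqref{eq:Aij-from-W}. Lemma~\ref{lemma_1} then gives
\[
\max_{t\in[0,T]}\|x_N(\cdot,t)-x(\cdot,t)\|_{L^2(I)}\le\left(\|\mathbf{g}_N-\mathbf{g}\|_{L^2(I)}+C_1\|W_N-W\|_{L^2(I^2)}\,T\right)e^{3LT}.
\]
Since $L$, $C_1$ and $T$ are fixed and independent of $N$, the prefactor $e^{3LT}$ stays bounded, and the whole right-hand side tends to $0$ as soon as the two $N$-dependent norms vanish. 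The first of these, $\|\mathbf{g}_N-\mathbf{g}\|_{L^2(I)}$, is precisely the value at $t=0$ of the left-hand side of \eqref{eq_16d}, and hence tends to $0$ by hypothesis. The whole argument thus reduces to the convergence of the second term.

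The main substantive step is therefore to prove that $\|W_N-W\|_{L^2(I^2)}\to0$ as $N\to\infty$. The key observation is that, by \eqref{eq:Aij-from-W}, $W_N$ is the cellwise average of $W$ over the squares $I_{N,i}\times I_{N,j}$; equivalently $W_N=P_N W$, where $P_N$ is the conditional expectation, i.e., the orthogonal projection in $L^2(I^2)$ onto the functions that are constant on each cell of the product partition $\mathcal{P}_N\times\mathcal{P}_N$. I would prove the convergence by density rather than by martingale theory, because the uniform partitions $\mathcal{P}_N$ are not nested for general $N$ (only along subsequences such as $N=2^k$), so the martingale convergence theorem does not apply verbatim, whereas the projection route avoids nesting entirely. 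Concretely, given $\varepsilon>0$ I would pick a continuous $\tilde W$ on $I^2$ with $\|W-\tilde W\|_{L^2(I^2)}<\varepsilon$, which is possible since $0\le W\le1$ implies $W\in L^2(I^2)$, and then bound
\[
\|W_N-W\|_{L^2(I^2)}\le\|P_N(W-\tilde W)\|_{L^2(I^2)}+\|P_N\tilde W-\tilde W\|_{L^2(I^2)}+\|\tilde W-W\|_{L^2(I^2)}.
\]
Because $P_N$ is a contraction on $L^2(I^2)$, the first term is at most $\|W-\tilde W\|_{L^2(I^2)}<\varepsilon$; the third term is $<\varepsilon$ by construction; and the middle term tends to $0$ because uniform continuity of $\tilde W$ forces its cellwise averages $P_N\tilde W$ to converge to $\tilde W$ uniformly, hence in $L^2(I^2)$. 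Letting $N\to\infty$ and then $\varepsilon\to0$ yields $\|W_N-W\|_{L^2(I^2)}\to0$. Alternatively, Assumption~\ref{assumption_3} guarantees that $W$ is continuous almost everywhere, so the Lebesgue differentiation theorem over the regular family of shrinking squares gives $W_N\to W$ almost everywhere, and the uniform bound $0\le W_N\le1$ upgrades this to $L^2$ convergence by dominated convergence.

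Combining these observations, every factor on the right-hand side of the displayed estimate is either bounded or vanishing, so $\max_{t\in[0,T]}\|x_N(\cdot,t)-x(\cdot,t)\|_{L^2(I)}\to0$, which is the assertion. I expect the graphon-sampling convergence $\|W_N-W\|_{L^2(I^2)}\to0$ to be the only genuine obstacle; the rest is bookkeeping around Lemma~\ref{lemma_1} together with the hypothesis \eqref{eq_16d} on the initial data.
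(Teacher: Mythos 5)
Your proof follows essentially the same route as the paper's: identify the discrete solution with its step-function embedding, apply the quantitative estimate of Lemma~\ref{lemma_1}, and observe that both $\left\|\mathbf{g}_N-\mathbf{g}\right\|_{L^2(I)}$ (by hypothesis \eqref{eq_16d}) and $\left\|W_N-W\right\|_{L^2(I^2)}$ tend to zero, so the right-hand side of the lemma's bound vanishes. The only difference is that the paper asserts the step-graphon convergence $\left\|W_N-W\right\|_{L^2(I^2)}\to 0$ as a known fact, whereas you supply a correct proof of it (via the $L^2$-contraction property of cellwise averaging plus density of continuous functions, with the a.e.-continuity/dominated-convergence route as an alternative), which strengthens rather than departs from the paper's argument.
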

	This theorem is an adaptation of Theorem 5.2 in Ref.~\cite{medvedev2014nonlinear}, and our proof is similar to theirs. We show the proof in Appendix \ref{sub_Sxx}.	
	
	\subsection{Galerkin approximation}\label{sec_galerkin}
	
	In this section, we construct the Galerkin approximation to the solution of the graphon dynamical system, i.e., \eqref{eq4}. We then show that the obtained Galerkin approximation is the solution of the dynamical system on the finite network induced by the graphon, i.e., \eqref{eq_1} (or equivalently \eqref{eq:pre-graphon-dynamical-system}) supplied with \eqref{eq:Aij-from-W}. In \cite{kaliuzhnyi2017semilinear}, it was shown that the Galerkin approximation for a different class of graphon dynamical system converges to the solution of the original graphon dynamical system as $N\to\infty$. We do not attempt to prove similar convergence results for our graphon dynamical system. It is because the equivalence of the Galerkin approximation and \eqref{eq_1}, which we show in this section, combined with the convergence results shown in section~\ref{sec_covergence} guarantees the convergence of the Galerkin solution to that of the graphon dynamical system.
	
	We write the initial condition as
	\begin{equation}\label{eq_34}
		x(y,0) = g(y),
	\end{equation}
	where $\mathbf{g} \in L^{\infty}(I)$.
	
	To simplify the notation, we define an operator $\tilde{K}: L^2(I)\to L^2(I)$ by
	\begin{equation}\label{eq_36}
		\left[\tilde{K}(\pmb{x})\right](y) = \int_{I} W(y,y') G(x(y), x(y'))\mathrm{d}y'.
	\end{equation} 
	Using \eqref{eq_36}, we rewrite \eqref{eq4} as
	\begin{equation}\label{eq_17}
		\dfrac{d\pmb{x}}{dt} = f(\pmb{x}) + \tilde{K}(\pmb{x})
	\end{equation}
	and
	\begin{equation}\label{eq_17a}
		\pmb{x}(0) = \mathbf{g}.
	\end{equation}
	
	\begin{definition}
		Consider ${\pmb{x}}\in H^1(L^2(I); [0,T])$. Then, ${\pmb{x}}$  is a weak solution of \eqref{eq4} with the initial condition given by \eqref{eq_17a} if
		\begin{equation}\label{eq_18b}
			\left({\pmb{x}}'(t) -f({\pmb{x}}(t)) -\tilde{K}[{\pmb{x}}(t)], {\bf v}\right)=0,~~\forall {\bf v}\in L^2(I).
		\end{equation}
		almost everywhere on $[0,T]$ and ${\pmb{x}}(0) = g$.
	\end{definition}
	
	Let $X_N=$span$\{ \chi_{I_{N,i}}:i\in \{1, \ldots,N \} \}$ be a linear subspace of $L^2(I)$; we recall that
	$\chi_{I_{N,i}}$ is the indicator function defined by \eqref{eq_a1}.
	Now we construct the Galerkin approximation to the solution of \eqref{eq_17} and \eqref{eq_17a} in the form
	\begin{equation}\label{eq_18a}
		\pmb{x}_N(t)=\sum\limits_{i=1}^N \tilde{x}_{Ni}(t)\chi_{I_{N,i}}.
	\end{equation}
	
	\begin{theorem}
		Consider the graphon dynamical system given by \eqref{eq4}. The Galerkin approximation to its solution, \eqref{eq_18a},
		is equivalent to the solution of the finite-network dynamical system \eqref{eq_1} induced by the graphon with the initial condition $\pmb{x}_{N}(0) = \mathbf{g} = (g_{N1}, \ldots, g_{NN})$, $g_{Ni} = N\int_{I_{N,i}}g(y)\mathrm{d}y$, $i\in \{1, \ldots, N\}$.
		%
		%
	\end{theorem}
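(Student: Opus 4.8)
The plan is to substitute the Galerkin ansatz \eqref{eq_18a} into the weak formulation \eqref{eq_18b} and read off the resulting system of ordinary differential equations for the coefficient functions $\tilde{x}_{Ni}(t)$. Since $X_N = \mathrm{span}\{\chi_{I_{N,i}} : i \in \{1,\ldots,N\}\}$, the weak equation \eqref{eq_18b} holds for all $\mathbf{v}\in X_N$ if and only if it holds for each basis function $\mathbf{v}=\chi_{I_{N,j}}$, $j\in\{1,\ldots,N\}$. I would therefore test \eqref{eq_18b} against $\chi_{I_{N,j}}$ and evaluate the three resulting inner products term by term, the goal being to recover exactly \eqref{eq:pre-graphon-dynamical-system} (equivalently \eqref{eq_1}) with the weights $A_{ij}=(W_N)_{ij}$ prescribed by \eqref{eq:Aij-from-W}. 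The correspondence is an equivalence because the same computation, read backwards, shows that the solution of \eqref{eq_1} assembled as a step function solves the Galerkin weak form.

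The first two terms follow immediately from the disjointness of the partition cells. Using the orthogonality relation $(\chi_{I_{N,i}},\chi_{I_{N,j}}) = N^{-1}\delta_{ij}$ (each cell has measure $1/N$), the time-derivative term collapses to $(\pmb{x}_N'(t),\chi_{I_{N,j}}) = N^{-1}\tilde{x}_{Nj}'(t)$. For the self-dynamics, the key observation is that $\pmb{x}_N(t)$ is piecewise constant, so the pointwise identity $\tilde{\eta}(f(\pmb{x})) = f(\tilde{\eta}(\pmb{x}))$ already used in deriving \eqref{eq:pre-graphon-dynamical-system} gives $f(\pmb{x}_N(t)) = \sum_i f(\tilde{x}_{Ni}(t))\chi_{I_{N,i}}\in X_N$; hence $(f(\pmb{x}_N(t)),\chi_{I_{N,j}}) = N^{-1}f(\tilde{x}_{Nj}(t))$.

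The coupling term $\tilde{K}[\pmb{x}_N(t)]$ from \eqref{eq_36} is the only one that invokes the graphon structure and is the heart of the argument. Because $\pmb{x}_N(t)(y')=\tilde{x}_{Nk}(t)$ for $y'\in I_{N,k}$, the inner integral splits over cells as $\tilde{K}[\pmb{x}_N(t)](y) = \sum_k G(\pmb{x}_N(t)(y),\tilde{x}_{Nk}(t))\int_{I_{N,k}}W(y,y')\,dy'$. Testing against $\chi_{I_{N,j}}$ integrates $y$ over $I_{N,j}$, on which $\pmb{x}_N(t)(y)=\tilde{x}_{Nj}(t)$, producing the double cell-integral $\int_{I_{N,j}\times I_{N,k}}W(y,y')\,dy\,dy'$, which by \eqref{eq:Aij-from-W} equals $N^{-2}(W_N)_{jk}$. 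Thus $(\tilde{K}[\pmb{x}_N(t)],\chi_{I_{N,j}}) = N^{-2}\sum_k (W_N)_{jk}\,G(\tilde{x}_{Nj}(t),\tilde{x}_{Nk}(t))$. Assembling the three contributions and clearing the common factor $N^{-1}$ yields the coefficient ODEs $\tilde{x}_{Nj}'(t) = f(\tilde{x}_{Nj}(t)) + N^{-1}\sum_k (W_N)_{jk}\,G(\tilde{x}_{Nj}(t),\tilde{x}_{Nk}(t))$, which are precisely \eqref{eq:pre-graphon-dynamical-system} evaluated on step functions with $A=W_N$, i.e., the finite-network system \eqref{eq_1} induced by the graphon. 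Finally, the initial condition matches by construction: the $L^2$ projection of $g$ onto $X_N$ has $j$-th coefficient $N(g,\chi_{I_{N,j}}) = N\int_{I_{N,j}}g(y)\,dy = g_{Nj}$, as required.

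The main obstacle is the coupling term. Unlike $f(\pmb{x}_N)$, the image $\tilde{K}[\pmb{x}_N]$ is in general \emph{not} piecewise constant, since $W(\cdot,y')$ need not be constant on the cells $I_{N,i}$; the Galerkin projection therefore acts nontrivially here, and one cannot read off pointwise values but must compute the genuine $L^2$ inner products against the basis. The cell-averaging of $W$ is exactly the mechanism that converts the continuum kernel into the discrete weights, and identifying this cell-average with the definition \eqref{eq:Aij-from-W} is what delivers the equivalence. The rest is the routine bookkeeping of the $N^{-1}$ normalization factors stemming from $|I_{N,i}|=1/N$, which enter all three terms consistently and cancel upon multiplying through by $N$.
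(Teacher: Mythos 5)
Your proposal is correct and follows essentially the same route as the paper: substitute the ansatz \eqref{eq_18a} into the weak form \eqref{eq_18b}, test against the indicator basis, use piecewise constancy of $\pmb{x}_N$ together with \eqref{eq:Aij-from-W} to identify the cell averages of $W$ with $(W_N)_{ij}$, and match the initial condition via the $L^2$ projection \eqref{eq_19a}. If anything, your bookkeeping is more careful than the paper's: your coefficient ODE correctly retains the $N^{-1}$ prefactor on the coupling sum, which is consistent with the integral forms \eqref{eq:pre-graphon-dynamical-system} and \eqref{eq:galerkin_diff_eq}, whereas the paper's \eqref{eq_20} writes the sum without this factor (silently absorbing the cell measure $|I_{N,j}|=1/N$ when identifying with \eqref{eq_1}).
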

	\begin{proof}
		We determine the differentiable coefficients $\tilde{x}_{Ni}(t)$, $i\in \{1, \ldots, N\}$ by solving \eqref{eq_18b} on partition $\mathcal{P}_N$. In other words, we project \eqref{eq_18b} and the initial condition onto $\mathcal{P}_N$, i.e.,
		\begin{equation}\label{eq_19}
			\langle \pmb{x}_N'(t) -f(\pmb{x}_N(t)) -\tilde{K}[\pmb{x}_N(t)], \chi_{I_{N,i}} \rangle=0
		\end{equation}
		for all indicator functions $\chi_{I_{N,i}}$ on partition $\mathcal{P}_{N}$ and
		\begin{equation}\label{eq_19a}
			\pmb{x}_N(0)= P_{\mathcal{P}_N}\mathbf{g}=\sum\limits_{i=1}^N\dfrac{(\mathbf{g},\chi_{I_{N,i}})}{(\chi_{I_{N,i}},\chi_{I_{N,i}})}\chi_{I_{N,i}},
		\end{equation}
		where $P_{\mathcal{P}_N}: L^2(I) \to \mathcal{P}_N$ stands for the orthogonal projection onto $\mathcal{P}_N$. 
		By substituting \eqref{eq_18a} in \eqref{eq_18b} with ${\bf v} = \chi_{I_{N,i}}, i \in \{1, \ldots, N\}$, we obtain
		\begin{equation}
			\langle \pmb{x}_N'(t)(y)-f(\pmb{x}_N(t)(y))-\tilde{K}[\pmb{x}_N(t)](y), \chi_{I_{N,i}}(y)\rangle = 0.
			\label{eq:x'_N-Galerkin-2}	
		\end{equation}
		Because any $y \in [0, 1]$ belongs to $I_{N,i}$ for exactly one $i$, we obtain $\pmb{x}_N(t)(y) = \tilde{x}_{Ni}(t)$ for that $i$ value. 
		Therefore, using $\pmb{x}_N(t)(y) = \tilde{x}_{Ni}(t)$ for $\forall y \in I_{N,i}$, \eqref{eq:Aij-from-W}, and \eqref{eq:x'_N-Galerkin-2}, we obtain
		the IVP
		\begin{equation}\label{eq_20}
			\dot{\tilde{x}}_{Ni}(t)= f(\tilde{x}_{Ni}(t)) + \sum\limits_{i=1}^N (W_N)_{ij} G(\tilde{x}_{Ni}(t),\tilde{x}_{Nj}(t)),
		\end{equation}
		with the initial condition
		\begin{equation}\label{eq_20b}
			\tilde{x}_{Ni}(0)=\dfrac{(\mathbf{g},\chi_{I_{N,i}})}{(\chi_{I_{N,i}},\chi_{I_{N,i}})}=N\int_{I_{N,i}}g(y)\,\mathrm{d}y.
		\end{equation}
		Equation~\eqref{eq_20} is equivalent to \eqref{eq_1} and can be written as
		\begin{equation}\label{eq:galerkin_diff_eq}
			\dfrac{\partial \tilde{x}_N(y,t)}{\partial t}=f(\tilde{x}_N(y,t))+\int_IW_N(y,y')G(\tilde{x}_N(y,t),\tilde{x}_N(y',t))\,\mathrm{d}y'
		\end{equation}
		and
		\begin{equation}\label{eq:galerkin_approx}
			\tilde{x}_N(y, t)=\sum_{i=1}^N\tilde{x}_{Ni}(t)\chi_{I_{N,i}}(y).
		\end{equation}
	\end{proof}
	
	\section{Dimension reduction methods}\label{sec_4}
	
	In this section, we derive a one-dimensional reduced system intended to approximate the graphon dynamical system \eqref{eq4} using two reduction methods: GBB reduction and spectral reduction. The mathematical results in this section establish consistency of the GBB reduction between the finite-network and graphon versions. We also bound the error between the solution of the GBB-reduced system and the corresponding observable of the full graphon dynamical system.
	
	\subsection{GBB reduction}
	
	For the dynamical system on undirected weighted networks given by \eqref{eq_1}, we define an operator $\mathcal{L}_N: \mathbb{R}^N \to \mathbb{R}$ by
	\begin{equation}\label{eq_45}
		\mathcal{L}_N ({\bf u}) = \frac{\sum_{i=1}^{N}\sum_{j=1}^{N}A_{ij}u_{i}}{\sum_{i=1}^{N}\sum_{j=1}^{N}A_{ij}} = \frac{\sum_{i=1}^N s_i u_i} {\sum_{i=1}^N s_i},
	\end{equation}
	where ${\bf u} = (u_1, \ldots, u_N)^\top \in \mathbb{R}^N$ is an $N$-dimensional vector,  and $s_i \equiv \sum_{j=1}^N A_{ij}$ represents the weighted degree (also called the strength) of the $i$th node.
	The GBB reduction \cite{gao2016universal}, which is an one-dimensional approximation to \eqref{eq_1}, is given by
	\begin{equation}\label{eq_46}
		\frac{dx_{N\text{eff}}}{dt}=f(x_{N\text{eff}})+\beta_\text{eff}\,G(x_{N\text{eff}}, x_{N\text{eff}}),
	\end{equation}
	where $\beta_{\text{eff}} = \mathcal{L}_N({\bf s})$, ${\bf s} = (s_1, \ldots, s_N)^{\top} \in \mathbb{R}^N$, and $x_{N\text{eff}} = \mathcal{L}_N(\pmb{x})$.
	
	Now we introduce the GBB reduction for the graphon dynamical system. We have shown that the dynamical system induced by a network which is generated from the graphon, \eqref{eq:pre-graphon-dynamical-system} combined with \eqref{eq:Aij-from-W}, converges to the graphon dynamical system given by \eqref{eq4}. Therefore, we first derive the GBB reduction of the graphon dynamical system.
	By regarding \eqref{eq:Aij-from-W} as a graphon with the identification
	\begin{equation}
		W_{N}(y,y') := (W_N)_{ij} \,\,\,\,\,\, \text{for}~ (y, y') \in I_{N,i}\times I_{N,j}
		\label{eq:W_N-as-graphon}
	\end{equation} and applying \eqref{eq:W_N-as-graphon} to \eqref{eq_45}, we obtain
	\begin{equation}\label{eq_47}
		\mathcal{L}_{N}(\textbf{u}_{N}) = \frac{\int_{0}^{1} \int_{0}^{1}W_{N}(y, y') u_{N}(y)\mathrm{d}y\mathrm{d}y' }{\int_{0}^{1} \int_{0}^{1}W_{N}(y, y')\mathrm{d}y\mathrm{d}y'},
	\end{equation}
	where we interpret the $N$-dimensional vector $\textbf{u}_{N}$ as a piecewise constant function on $[0, 1]$ represented as $\textbf{u}_{N}(y,t) = \sum_{i=1}^{N}u_{i}(t)\chi_{I_{N,i}}(y)$.
	In the limit $N\to\infty$, \eqref{eq_47} becomes
	\begin{equation}\label{eq_48}
		\mathcal{L}(\textbf{u}) = \frac{\int_{0}^{1}\int_{0}^{1} W(y,y') u(y)\mathrm{d}y\mathrm{d}y'}{\int_{0}^{1}\int_{0}^{1} W(y,y')\mathrm{d}y\mathrm{d}y'}
		= \frac{\int_{0}^{1} s(y) u(y) \mathrm{d}y} {\int_{0}^{1} s(y)\mathrm{d}y},
	\end{equation}
	where $\textbf{u} \in L^p(I)$ and $s(y) \in L^1(I)$ is defined in \eqref{eq:def-s(y)}.
By applying the operator $\mathcal{L}$ to graphon dynamical system, \eqref{eq4}, we obtain
\begin{align}\label{eq_4}
	\dfrac{\partial\mathcal{L}(\pmb{x})}{\partial t}
	%
	%
	&=\dfrac{\int_0^1f(x(y,t))\,s(y)\,\mathrm{d}y}{\int_0^1s(y)\,\mathrm{d}y}+\dfrac{\int_0^1\int_0^1W(y,y')\,G(x(y,t),x(y',t))\,s(y)\,\mathrm{d}y'\,\mathrm{d}y}{\int_0^1s(y)\,\mathrm{d}y}.
\end{align}
By adapting the GBB ansatz for finite networks to the case of graphons, we use the following approximations:
\begin{equation}\label{eq_5a}
	\dfrac{\int_0^1f(x(y,t))\,s(y)\,\mathrm{d}y}{\int_0^1s(y)\,\mathrm{d}y}\approx f\left(\dfrac{\int_0^1x(y,t)\,s(y)\,\mathrm{d}y}{\int_0^1s(y)\,\mathrm{d}y}\right)\\=f(\mathcal{L}(\pmb{x}))
\end{equation}
and	
\begin{align}\label{eq_5b}
	\dfrac{\int_0^1\int_0^1W(y,y')\,G(x(y,t),x(y',t))\,s(y)\,\mathrm{d}y'\,\mathrm{d}y}{\int_0^1s(y)\,\mathrm{d}y}&\approx
	\dfrac{\int_0^1\int_0^1W(y,y')\,s(y)\,\mathrm{d}y\,\mathrm{d}y'}{\int_0^1s(y)\,\mathrm{d}y}\,G\left(\dfrac{\int_0^1x(y,t)s(y)\,\mathrm{d}y}{\int_0^1s(y)\,\mathrm{d}y},\dfrac{\int_0^1x(y',t)s(y')\,\mathrm{d}y'}{\int_0^1s(y')\,\mathrm{d}y'}\right)\nonumber\\[14pt]
	&=\mathcal{L}({\bf s})\,G(\mathcal{L}(\pmb{x}),\mathcal{L}(\pmb{x})),
\end{align}
where $\approx$ represents ``approximately equal to''.
By substituting \eqref{eq_5a} and \eqref{eq_5b} in \eqref{eq_4}, we obtain \eqref{eq_46}, with $\beta_{\text{eff}} = \mathcal{L}({\bf s})$ and $x_\text{eff} = \mathcal{L}(\pmb{x})$. 

Since \eqref{eq_5a} and \eqref{eq_5b} are approximation steps underlying the GBB reduction method \cite{gao2016universal}, we quantify their accuracy in the following proposition. Specifically, the proposition bounds the discrepancy introduced by interchanging the GBB reduction operator $\mathcal{L}$ with the self-dynamics \eqref{eq_5a} and the coupling function \eqref{eq_5b}. 
\begin{proposition}\label{prop_appr-bdd}
	Let $x(y,t)$ be a solution of the graphon dynamical system on $[0,T]$. We assume that $x(y,t) \in \Omega$ for almost every $y \in I$ and all $t \in [0, T]$. Define the weighted spread by
	\begin{equation}\label{eq_variance}
		D_s(x (t)) := \frac{\int_{0}^{1} s(y)\left|x(y,t) - \mathcal{L}(\pmb{x}(t))\right| \mathrm{d}y}{\int_{0}^{1}s(y)\mathrm{d}y}
	\end{equation}
	and a constant
	\begin{equation}
		s_{*} := {\text{ess}\sup}_{y\in I}s(y).
	\end{equation}
	Then, we can bound the interchangeability of the GBB reduction operator, $\mathcal{L}$, with the self dynamics and the coupling function as follows:
	\begin{equation}
		\left|\mathcal{L}(f(\pmb{x}(t))) - f(\mathcal{L}(\pmb{x}(t)))\right| \leq L_f \sqrt{D_s(x)}
	\end{equation}
	and 
	\begin{equation}
		\left|\mathcal{L}\left(\int_{0}^{1}W(\cdot, y')G(x(\cdot, t), x(y', t))\mathrm{d}y'\right) - \beta_{\text{eff}}G\left(\mathcal{L}(\pmb{x}(t)), \mathcal{L}(\pmb{x}(t))\right)\right| \leq 2L_G s_{*}\sqrt{D_s(x)}.
	\end{equation}
\end{proposition}
\begin{remark}
	The weighted spread $D_s$ defined in \eqref{eq_variance} represents the weighted absolute deviation of the graphon solution from the GBB observable, where the weight is given by the degree function $s(y)$. 
\end{remark}
\begin{remark}
	This proposition implies that the interchanging the GBB operator with the self-dynamics (i.e., \eqref{eq_5a}) and that with the coupling function (i.e., \eqref{eq_5b}) are accurate when $D_s$ is small, i.e., when $x(y,t)$ is close to $\mathcal{L}(\pmb{x}(t))$ in the $L^2$ sense. However, this closeness is not guaranteed for all graphon dynamical systems used in the numerical simulations in Section \ref{sec_num-res}.
\end{remark}
\begin{proof}
	Using \eqref{eq_48} and \eqref{eq_5a}, we obtain
	\begin{align}
		\left|\mathcal{L}\left(f(\pmb{x}(t))\right) - f\left(\mathcal{L}(\pmb{x}(t))\right)\right| &= \left|\frac{\int_{0}^{1}s(y)f(x(y,t))\mathrm{d}y}{\int_{0}^{1}s(y)\mathrm{d}y} - \frac{\int_{0}^{1}s(y)f(\mathcal{L}(\pmb{x})(t)) \mathrm{d}y}{\int_{0}^{1}s(y)\mathrm{d}y}\right| \nonumber \\
		&= \frac{1}{\int_{0}^{1}s(y)\mathrm{d}y} \left[\int_{0}^{1}s(y)\left|f(x(y,t)) - f(\mathcal{L}(\pmb{x}(t)))\right| \mathrm{d}y\right] 
	\end{align}
	Using Assumption \ref{assumption_1} for the self-dynamics, we obtain
	\begin{align}
		\left|\mathcal{L}\left(f(\pmb{x}(t))\right) - f\left(\mathcal{L}(\pmb{x}(t))\right)\right| &\leq \frac{L_f}{\int_{0}^{1}s(y)\mathrm{d}y} \int_{0}^{1}s(y)\left|x(y,t) - \mathcal{L}(\pmb{x}(t))\right|\mathrm{d}y \nonumber \\
		&\leq \frac{L_f}{\int_{0}^{1}s(y)\mathrm{d}y} \int_{0}^{1}\sqrt{s(y)}\sqrt{s(y)}\left|x(y,t) - \mathcal{L}(\pmb{x}(t))\right|\mathrm{d}y 
		\label{eq:prop1-1}
	\end{align}
	By applying the Cauchy-Schwartz inequality, we obtain
	\begin{align}\label{eq_Lf-bdd}
		(\text{RHS of }\eqref{eq:prop1-1})
		%
		%
		&\leq \frac{L_f \left(\int_{0}^{1} s(y) \mathrm{d}y\right)^{\frac{1}{2}}}{\int_{0}^{1}s(y)\mathrm{d}y}\left(\int_{0}^{1}s(y) \left|x(y,t) - \mathcal{L}(\pmb{x}(t))\right|^{2} \mathrm{d}y\right)^{\frac{1}{2}}\nonumber\\
		&= L_f \left(\frac{\int_{0}^{1}s(y) \left|x(y,t) - \mathcal{L}(\pmb{x}(t))\right|^{2} \mathrm{d}y}{\int_{0}^{1}s(y)\mathrm{d}y}\right)^{\frac{1}{2}}\nonumber\\
		&=  L_f \sqrt{D_s(x)}.
	\end{align}
	Similarly, we consider
	\begin{align}
		&\left|\mathcal{L}\left(\int_{0}^{1}W(\cdot, y')G(x(\cdot, t), x(y', t))\mathrm{d}y'\right) - \beta_{\text{eff}}G\left(\mathcal{L}(\pmb{x}(t)), \mathcal{L}(\pmb{x}(t))\right)\right| \nonumber\\
		=&\left|\frac{1}{\int_{0}^{1}s(y)\mathrm{d}y}\left(\int_{0}^{1}s(y)\int_{0}^{1}W(y, y')G(x(y, t), x(y', t))\mathrm{d}y' \mathrm{d}y -  \beta_{\text{eff}}G\left(\mathcal{L}(\pmb{x}(t)), \mathcal{L}(\pmb{x}(t))\right)\right)\right|.
		\label{eq:prop1-2}
	\end{align}
	By substituting $\beta_{\text{eff}} = \frac{\int_{0}^{1}\int_{0}^{1}s(y)W(y,y')\mathrm{d}y'\mathrm{d}y}{\int_{0}^{1}s(y)\mathrm{d}y}$ in \eqref{eq:prop1-2}, we obtain
	\begin{align}
		& (\text{RHS of }\eqref{eq:prop1-2}) \nonumber\\
		=&\left|\frac{\int_{0}^{1}\int_{0}^{1}s(y)W(y, y')G(x(y, t), x(y', t))\mathrm{d}y' \mathrm{d}y}{\int_{0}^{1}s(y)\mathrm{d}y} -  \frac{\int_{0}^{1}\int_{0}^{1}s(y)W(y,y')G\left(\mathcal{L}(\pmb{x}(t)), \mathcal{L}(\pmb{x}(t))\right)\mathrm{d}y'\mathrm{d}y}{\int_{0}^{1}s(y)\mathrm{d}y}\right|\nonumber \\
		=& \frac{1}{\int_{0}^{1}s(y)\mathrm{d}y} \int_{0}^{1}\int_{0}^{1}s(y) W(y,y') \left|G(x(y, t), x(y', t)) - G\left(\mathcal{L}(\pmb{x}(t)), \mathcal{L}(\pmb{x}(t))\right)\right|\mathrm{d}y'\mathrm{d}y.
		\label{eq:prop1-3}		
	\end{align}
	Using Assumption \ref{assumption_1} for the coupling function, we obtain
	\begin{align}\label{eq_Gbdd}
		& (\text{RHS of }\eqref{eq:prop1-3}) \nonumber\\
		%
		%
		\leq& \frac{L_G}{\int_{0}^{1}s(y)\mathrm{d}y}\int_{0}^{1}\int_{0}^{1}s(y) W(y,y') \left\|\left(x(y, t), x(y', t)\right) - \left(\mathcal{L}(\pmb{x}(t)), \mathcal{L}(\pmb{x}(t))\right)\right\|_2\mathrm{d}y'\mathrm{d}y\nonumber \\
		=& \frac{L_G}{\int_{0}^{1}s(y)\mathrm{d}y}\left[\int_{0}^{1}\int_{0}^{1}s(y) W(y,y') \left|\left(x(y, t)- \mathcal{L}(\pmb{x}(t))\right)\right|\mathrm{d}y'\mathrm{d}y + \int_{0}^{1}\int_{0}^{1}s(y) W(y,y') \left|\left(x(y', t)- \mathcal{L}(\pmb{x}(t))\right)\right|\mathrm{d}y'\mathrm{d}y\right].
	\end{align}
	Let $I_1 = \frac{1}{\int_{0}^{1}s(y)\mathrm{d}y}\int_{0}^{1}\int_{0}^{1}s(y) W(y,y') \left|\left(x(y, t)- \mathcal{L}(\pmb{x}(t))\right)\right|\mathrm{d}y'\mathrm{d}y$. Then, we obtain
	\begin{align}
		I_1 &= \frac{1}{\int_{0}^{1}s(y)\mathrm{d}y}\int_{0}^{1}s^2(y)\left|\left(x(y, t)- \mathcal{L}(\pmb{x}(t))\right)\right|\mathrm{d}y \nonumber \\
		&\leq \frac{s_{*}}{\int_{0}^{1}s(y)\mathrm{d}y}\int_{0}^{1}s(y)\left|\left(x(y, t)- \mathcal{L}(\pmb{x}(t))\right)\right|\mathrm{d}y.
		\label{eq:prop1-4}
	\end{align}
	By applying the Cauchy-Schwartz inequality, we obtain
	\begin{align}\label{eq_I1-bdd}
		%
		(\text{RHS of }\eqref{eq:prop1-4}) &\leq \frac{s_{*}\left(\int_{0}^{1}s(y)\right)^{\frac{1}{2}}}{\int_{0}^{1}s(y)\mathrm{d}y}\left(\int_{0}^{1}s(y)\left|\left(x(y, t)- \mathcal{L}(\pmb{x}(t))\right)\right|^2\mathrm{d}y\right)^{\frac{1}{2}} \nonumber \\
		&=\frac{s_{*}}{\left(\int_{0}^{1}s(y)\right)^{\frac{1}{2}}}\left(\int_{0}^{1}s(y)\left|\left(x(y, t)- \mathcal{L}(\pmb{x}(t))\right)\right|^2\mathrm{d}y\right)^{\frac{1}{2}} \nonumber\\
		&=s_{*}\sqrt{D_s(x(t))}.
	\end{align}
	Similarly, let $I_2 = \frac{1}{\int_{0}^{1}s(y)\mathrm{d}y}\int_{0}^{1}\int_{0}^{1}s(y) W(y,y') \left|\left(x(y', t)- \mathcal{L}(\pmb{x}(t))\right)\right|\mathrm{d}y'\mathrm{d}y$. By the same arguments as in the case of $I_1$, we obtain
	\begin{align}\label{eq_I2-bdd}
		I_2 &\leq s_{*}\sqrt{D_s(x(t))}.
	\end{align}
	We substitute \eqref{eq_I1-bdd} and \eqref{eq_I2-bdd} in \eqref{eq_Gbdd} to obtain
	\begin{align}\label{eq_Lg-bdd}
		\left|\mathcal{L}\left(\int_{0}^{1}W(\cdot, y')G(x(\cdot, t), x(y', t))\mathrm{d}y'\right) - \beta_{\text{eff}}G\left(\mathcal{L}(\pmb{x}(t)), \mathcal{L}(\pmb{x}(t))\right)\right| &\leq 2s_{*}L_G\sqrt{D_s(x)}.
	\end{align}
\end{proof}

Now, we establish the accuracy of the GBB observable, $\mathcal{L}(\pmb{x})$, in approximating the solution of the one-dimensional reduced equation of the graphon dynamical system, $x_\text{eff}$, beginning with the following Theorem.

\begin{theorem}\label{thm:accuracy}
	Let $x(y,t)$ be the solution of the IVP for \eqref{eq4}, and
	$\mathcal{L}(\pmb{x}(t))$ be the corresponding GBB observable. Let $x_\text{eff}(t)$ be the solution of the one-dimensional reduced equation for the graphon dynamical system
	(i.e., \eqref{eq_46} but with $x_{N\text{eff}}(t)$ and $\beta_{\text{eff}}$ being replaced by $x_\text{eff}(t)$ and the graphon version of 
	$\beta_{\text{eff}}$, respectively). We assume that $x(y,t) \in \Omega$ for almost every $y \in I$ and for all $t \in [0,T]$, and that $x_\text{eff}(t) \in \Omega$ for all $t \in [0,T]$. Then, for every $t\in [0,T]$, we obtain
	\begin{equation}
		\left|\mathcal{L}(\pmb{x} (t)) - x_{\text{eff}}(t)\right| \leq e^{KT}\left|\mathcal{L}(\pmb{x}(0)) - x_{\text{eff}}(0)\right| + \left(L_f +2s_{*}L_G\right)\int_{0}^{T}e^{K(T- \tau)}\sqrt{D_s(x(\tau))} \mathrm{d}\tau.
	\end{equation}
\end{theorem}
\begin{proof}
	Define the residual by
	\begin{equation}\label{eq_residu}
		\text{Res}_{\text{GBB}}(t) :=  \mathcal{L}\left(\frac{\partial \left(\pmb{x}(\cdot, t)\right)}{\partial t}\right) - \left[f(\mathcal{L}(\pmb{x}(t))) + \beta_{\text{eff}} G(\mathcal{L}(\pmb{x}(t)), \mathcal{L}(\pmb{x}(t)))\right].
	\end{equation}
	Since $\mathcal{L}$ is linear and time-independent, we have 
	\begin{equation}
		\mathcal{L}\left(\frac{\partial (\pmb{x}(\cdot, t))}{\partial t}\right) = \frac{\mathrm{d} \mathcal{L}(\pmb{x}(t))}{\mathrm{d}t}.
	\end{equation}
	Let $e(t) = \mathcal{L}(\pmb{x} (t)) - x_{\text{eff}}(t)$ be the error function. We obtain
	\begin{align}\label{eq:de(t)/dt}
		\frac{\mathrm{d}e(t)}{\mathrm{d}t} &= \frac{\mathrm{d} \mathcal{L}(\pmb{x}(t))}{\mathrm{d}t} - \frac{\mathrm{d} x_{\text{eff}}(t)}{\mathrm{d}t} \\ \nonumber
		&= \mathcal{L}\left(\frac{\partial (\pmb{x}(\cdot, t))}{\partial t}\right) - \left[f\left(x_{\text{eff}}(t)\right) + \beta_{\text{eff}} G\left(x_{\text{eff}}(t), x_{\text{eff}}(t)\right)\right]\\ \nonumber
		&= \left[f(\mathcal{L}(\pmb{x}(t))) + \beta_{\text{eff}} G(\mathcal{L}(\pmb{x}(t)), \mathcal{L}(\pmb{x}(t)))\right] + \text{Res}_{\text{GBB}}(t) - \left[f\left(x_{\text{eff}}(t)\right) + \beta_{\text{eff}} G\left(x_{\text{eff}}(t), x_{\text{eff}}(t)\right)\right] \\ \nonumber
		&= f\left(\mathcal{L}(\pmb{x}(t))\right) - f\left(x_{\text{eff}}(t)\right) + \beta_{\text{eff}} \left[G(\mathcal{L}(\pmb{x}(t)), \mathcal{L}(\pmb{x}(t))) - G\left(x_{\text{eff}}(t), x_{\text{eff}}(t)\right) \right]+ \text{Res}_{\text{GBB}}(t).
	\end{align}
	Using \eqref{eq:de(t)/dt}, we obtain
	\begin{align}
		\left|e'(t)\right| &\leq \left|f\left(\mathcal{L}(\pmb{x}(t))\right) - f\left(x_{\text{eff}}(t)\right)\right| + \left|\beta_{\text{eff}}\right| \cdot \left|G(\mathcal{L}(\pmb{x}(t)), \mathcal{L}(\pmb{x}(t))) - G\left(x_{\text{eff}}(t), x_{\text{eff}}(t)\right)\right|+ \left|\text{Res}_{\text{GBB}}(t)\right|.
		\label{eq:thm7-1}
	\end{align}
	Using Assumption \ref{assumption_1}, we obtain
	\begin{align}
		& (\text{RHS of }\eqref{eq:thm7-1}) \nonumber\\
		%
		%
		\leq& L_f\left|\mathcal{L}(\pmb{x}(t)) - x_{\text{eff}}(t)\right| + \left|\beta_{\text{eff}}\right| L_G \left\|(\mathcal{L}(\pmb{x}(t)), \mathcal{L}(\pmb{x}(t))) - (x_{\text{eff}}(t), x_{\text{eff}}(t))\right\|_2 + \left|\text{Res}_{\text{GBB}}(t)\right|\\ \nonumber
		=& L_f \left|\mathcal{L}(\pmb{x}(t)) - x_{\text{eff}}(t)\right| + \sqrt{2}\left|\beta_{\text{eff}}\right| L_G \left|\mathcal{L}(\pmb{x}(t))- x_{\text{eff}}(t) \right| + \left|\text{Res}_{\text{GBB}}(t)\right| \\ \nonumber
		=& \left(L_f + \sqrt{2}\left|\beta_{\text{eff}}\right|L_G\right) \left|\mathcal{L}(\pmb{x}(t)) - x_{\text{eff}}(t)\right| + \left|\text{Res}_{\text{GBB}}(t)\right|\\ \nonumber
		=&  \left(L_f + \sqrt{2}\left|\beta_{\text{eff}}\right|L_G\right) \left|e(t)\right| + \left|\text{Res}_{\text{GBB}}(t)\right|.
	\end{align}
	With $K \equiv L_f + \sqrt{2}\left|\beta_{\text{eff}}\right|L_G$, we obtain
	\begin{equation}\label{eq_err-deri-bdd}
		\left|\frac{\mathrm{d}e(t)}{\mathrm{d}t}\right| \leq K \left|e(t)\right| + \left|\text{Res}_{\text{GBB}}(t)\right|.
	\end{equation}
	
	By substituting $\mathcal{L}\left(\frac{\partial \left(\pmb{x}(\cdot, t)\right)}{\partial t}\right) = \mathcal{L}(f(\pmb{x}(t))) + \mathcal{L}\left(\int_{0}^{1}W(\cdot, y')G(x(\cdot, t), x(y', t))\mathrm{d}y'\right)$ in
	\eqref{eq_residu}
	and applying Proposition \ref{prop_appr-bdd}, we obtain
	\begin{align}\label{eq_resi-bdd}
		\left|\text{Res}_{\text{GBB}}(t)\right| &\leq \left|\mathcal{L}(f(\pmb{x}(t))) - f(\mathcal{L}(\pmb{x}(t)))\right| + \left|\mathcal{L}\left(\int_{0}^{1}W(\cdot, y')G(x(\cdot, t), x(y', t))\mathrm{d}y'\right) - \beta_{\text{eff}}G\left(\mathcal{L}(\pmb{x}(t)), \mathcal{L}(\pmb{x}(t))\right)\right|\nonumber\\
		&\leq \left(L_f + 2s_{*}L_G\right)\sqrt{D_s(x)(t)}.
	\end{align}
	By substituting \eqref{eq_resi-bdd} in \eqref{eq_err-deri-bdd}, we obtain
	\begin{align}
		\left|\frac{\mathrm{d}e(t)}{\mathrm{d}t}\right| &\leq K \left|e(t)\right| + \left(L_f + 2s_{*}L_G\right)\sqrt{D_s(x (t))}.
		\label{eq:thm7-2}
	\end{align}
	By applying Gr$\ddot{\text{o}}$nwall's inequality to \eqref{eq:thm7-2}, we obtain
	\begin{equation}
		\left|e (t)\right| \leq e^{Kt}\left|e(0)\right| + \left(L_f + 2s_{*}L_G\right)\int_{0}^{t}e^{K(t-\tau)} \sqrt{D_s(x(\tau))} \mathrm{d}\tau,
	\end{equation}
	namely,
	\begin{equation}
		\left|\mathcal{L}(\pmb{x} (t)) - x_{\text{eff}}(t)\right| \leq e^{Kt}\left|\mathcal{L}(\pmb{x}(0)) - x_{\text{eff}}(0)\right| + \left(L_f + 2s_{*}L_G\right)\int_{0}^{t}e^{K(t-\tau)} \sqrt{D_s(x(\tau))} \mathrm{d}\tau.
		\label{eq:e(t)-bound-1}
	\end{equation}
	Since $K\geq 0$, the RHS of \eqref{eq:e(t)-bound-1} monotonically increases with $t$, so that we obtain
	\begin{equation}
		\left|\mathcal{L}(\pmb{x}(t)) - x_{\text{eff}}(t)\right| \leq e^{KT}\left|\mathcal{L}(\pmb{x}(0)) - x_{\text{eff}}(0)\right| + \left(L_f + 2s_{*}L_G\right)\int_{0}^{T}e^{K(T-\tau)} \sqrt{D_s(x(\tau))} \mathrm{d}\tau.
	\end{equation}
	
\end{proof}

Now, we prove that the dynamical variable for the GBB reduction for finite networks, $x_{N\text{eff}}$, converges to that for the graphon dynamical system, $x_\text{eff}$.
\begin{theorem}\label{thm:dim_red_convergence}
	Let $\pmb{x}_N$ and $\pmb{x}$ be the solution of the IVP \eqref{eq:pre-graphon-dynamical-system} and \eqref{eq4}, respectively. Then, for any fixed $T \geq 0, \, \lim_{N \to \infty} \max_{t \in [0,T]}\left|\mathcal{L}_{N}(\pmb{x}_{N}(t)) - \mathcal{L}(\pmb{x}(t))\right| = 0$.
\end{theorem}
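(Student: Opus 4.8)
The plan is to write both reduction variables as ratios of integrals, observe that the denominators are in fact identical, and reduce the statement to a uniform-in-$t$ estimate on the numerators. Writing $Q := \int_0^1\int_0^1 W(y,y')\,dy\,dy'$ and $Q_N := \int_0^1\int_0^1 W_N(y,y')\,dy\,dy'$, the key opening observation is that $Q_N = Q$ exactly. Indeed, since $W_N$ is constant on each cell $I_{N,i}\times I_{N,j}$ with value $N^2\int_{I_{N,i}\times I_{N,j}}W$ by \eqref{eq:Aij-from-W}, one has $\int_{I_{N,i}\times I_{N,j}}W_N = \int_{I_{N,i}\times I_{N,j}}W$ for every $i,j$, and summing over the partition gives $Q_N = Q$. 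Assuming $Q>0$ (which is needed for $\mathcal{L}$ to be well defined), it therefore suffices to control the numerators. Using that $x_N(\cdot,t)$ and $x(\cdot,t)$ depend only on the first argument, I would write $\mathcal{L}_N(\pmb{x}_N(t)) = Q^{-1}\int_0^1 s_N(y)\,x_N(y,t)\,dy$ and $\mathcal{L}(\pmb{x}(t)) = Q^{-1}\int_0^1 s(y)\,x(y,t)\,dy$, where $s_N(y) := \int_0^1 W_N(y,y')\,dy'$ and $s(y)$ is given by \eqref{eq:def-s(y)}.

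Next I would split the numerator difference as
\begin{equation}
\int_0^1\!\bigl[s_N(y)x_N(y,t)-s(y)x(y,t)\bigr]dy = \int_0^1\! s_N(y)\bigl[x_N(y,t)-x(y,t)\bigr]dy + \int_0^1\!\bigl[s_N(y)-s(y)\bigr]x(y,t)\,dy,
\end{equation}
and bound the two pieces by Cauchy--Schwarz. For the first piece, $0\le s_N(y)\le 1$ because the entries of $W_N$ lie in $[0,1]$, so $\|s_N\|_{L^2(I)}\le 1$ and the term is at most $\|x_N(\cdot,t)-x(\cdot,t)\|_{L^2(I)}$, which tends to $0$ uniformly in $t\in[0,T]$ by Theorem~\ref{thm:convergence}. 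For the second piece, Cauchy--Schwarz gives the bound $\|s_N-s\|_{L^2(I)}\,\|x(\cdot,t)\|_{L^2(I)}$; here $\sup_{t\in[0,T]}\|x(\cdot,t)\|_{L^2(I)}\le \sup_{t\in[0,T]}\|x(\cdot,t)\|_{L^\infty(I)}$ is finite by the boundedness estimate of Theorem~\ref{theorem_2}. Moreover $s_N-s = \int_0^1(W_N-W)\,dy'$, so by Jensen's inequality and Fubini's theorem $\|s_N-s\|_{L^2(I)}\le \|W_N-W\|_{L^2(I^2)}$.

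Combining these bounds yields $\max_{t\in[0,T]}\abs{\mathcal{L}_N(\pmb{x}_N(t))-\mathcal{L}(\pmb{x}(t))} \le Q^{-1}\bigl(\max_{t\in[0,T]}\|x_N(\cdot,t)-x(\cdot,t)\|_{L^2(I)} + C_T\,\|W_N-W\|_{L^2(I^2)}\bigr)$, with $C_T$ the bound on $\sup_{t}\|x(\cdot,t)\|_{L^\infty(I)}$ supplied by Theorem~\ref{theorem_2}. The single remaining technical input, and the main point to verify, is that $\|W_N-W\|_{L^2(I^2)}\to 0$ as $N\to\infty$. This holds because $W_N$ is exactly the $L^2(I^2)$-orthogonal projection of $W$ onto the space of functions constant on the cells of $\mathcal{P}_N\times\mathcal{P}_N$; since these subspaces increase with $N$ and their union is dense in $L^2(I^2)$, the projections converge to $W$ in $L^2$ (alternatively, under Assumption~\ref{assumption_3} the a.e.\ continuity of $W$ makes the convergence of the piecewise averages transparent). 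The same projection argument applied to the initial data $g_{Ni}=N\int_{I_{N,i}}g$ gives $\|x_N(\cdot,0)-x(\cdot,0)\|_{L^2(I)}\to 0$, which is precisely the hypothesis \eqref{eq_16d} needed to invoke Theorem~\ref{thm:convergence}. Letting $N\to\infty$ then drives both terms to $0$, establishing the claim. I expect the estimation itself to be routine Cauchy--Schwarz bookkeeping; the only genuine content is the $L^2$-convergence $W_N\to W$ together with the exact equality of the denominators, which removes any need to separately control the ratio's denominator.
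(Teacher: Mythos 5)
Your proof is correct and follows essentially the same route as the paper's: the paper also reduces the difference to a single denominator $\int_0^1\int_0^1 W(y,y')\,dy\,dy'$ (using, implicitly, exactly the cell-average identity $Q_N=Q$ that you state explicitly), performs the same add--subtract decomposition of the numerator, applies Cauchy--Schwarz, and closes with Theorem~\ref{theorem_2}, Theorem~\ref{thm:convergence}, and $\left\|W_N-W\right\|_{L^2(I^2)}\to 0$. Your marginalized variant (working with $s_N$ and $s$ rather than with $W_N$ and $W$ on $I^2$) is a cosmetic difference; the estimates are term-for-term the same. Two remarks. First, you are more careful than the paper on two points it leaves implicit: the equality of the denominators, and the verification of hypothesis \eqref{eq_16d} before invoking Theorem~\ref{thm:convergence}; both additions are welcome. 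Second, one justification you give is flawed as stated: the uniform partitions $\mathcal{P}_N$ are \emph{not} nested as $N$ increases (e.g., $\mathcal{P}_2$ is not refined by $\mathcal{P}_3$), so the ``increasing subspaces whose union is dense'' argument for $\left\|W_N-W\right\|_{L^2(I^2)}\to 0$ only works along dyadic subsequences; for the full sequence one needs either the standard density argument (the averaging projections $P_N$ are $L^2$-contractions and $P_N f\to f$ uniformly for continuous $f$) or your own fallback, namely that under Assumption~\ref{assumption_3} the cell averages converge to $W$ at every continuity point and $\left|W_N-W\right|\le 1$ allows dominated convergence. Since that fallback is valid (and the paper itself simply asserts this convergence as known), the proof stands.
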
	
\begin{proof}
	Using \eqref{eq_47}, we obtain
	\begin{align}
		\left|\mathcal{L}_{N}(\pmb{x}_{N}(t)) - \mathcal{L}(\pmb{x}(t))\right|
		=& \left|\frac{\int_{0}^{1} \int_{0}^{1}W_{N}(y, y') x_{N}(y, t)\mathrm{d}y\mathrm{d}y' }{\int_{0}^{1} \int_{0}^{1}W_{N}(y, y') \mathrm{d}y\mathrm{d}y'} - \frac{\int_{0}^{1}\int_{0}^{1} W(y,y') x(y,t)\mathrm{d}y\mathrm{d}y'}{\int_{0}^{1}\int_{0}^{1} W(y,y')\mathrm{d}y\mathrm{d}y'}\right|\nonumber\\
		&=\left|\frac{\int_{0}^{1} \int_{0}^{1}\left(W_{N}(y, y') x_{N}(y,t) - W(y,y') x(y,t)\right)\mathrm{d}y\mathrm{d}y'}{\int_{0}^{1}\int_{0}^{1} W(y,y')\mathrm{d}y\mathrm{d}y'}\right|.
		\label{eq:L-convergence-proof-1}
	\end{align}
	We obtain
	\begin{align}\label{eq:Lnorm-convergence-proof-1}
		& (\text{Numerator of \eqref{eq:L-convergence-proof-1}})\nonumber\\
		=& \left|\int_{0}^{1} \int_{0}^{1}\left(W_{N}(y, y') x_{N}(y,t) - W_{N}(y,y') x(y,t) + W_{N}(y, y') x(y,t) - W(y,y') x(y,t)\right)\mathrm{d}y\mathrm{d}y'\right| \nonumber\\
		& \leq \int_{0}^{1}\left[\int_{0}^{1} \left|W_{N}(y,y')\left(x_{N}(y,t) - x(y,t)\right)\right|\mathrm{d}y + \int_{0}^{1}\left|x(y,t)\left(W_{N}(y,y') - W(y,y')\right)\right| \mathrm{d}y\right]\mathrm{d}y'\nonumber \\
		& = \int_{0}^{1}\left\|W_{N}(\cdot,y')\left(x_{N}(\cdot,t) - x(\cdot,t)\right)\right\|_{L^1(I)}\mathrm{d}y' + \int_{0}^{1}\left\|x(\cdot,t)\left(W_{N}(\cdot,y') - W(\cdot,y')\right)\right\|_{L^1(I)}\mathrm{d}y'.
	\end{align}
	By applying the Cauchy-Schwarz inequality to \eqref{eq:Lnorm-convergence-proof-1}, we obtain
	\begin{align}\label{eq:L1_Cauchy}
		& (\text{Right-hand side of \eqref{eq:Lnorm-convergence-proof-1}}) \nonumber\\
		\leq& \int_{0}^{1}\left\|W_{N}(\cdot,y')\right\|_{L^2(I)}\left\|x_{N}(\cdot,t) - x(\cdot,t)\right\|_{L^2(I)}\mathrm{d}y' + \int_{0}^{1}\left\|x(\cdot,t)\right\|_{L^2(I)}\left\|W_{N}(\cdot,y') - W(\cdot,y')\right\|_{L^2(I)}\mathrm{d}y'\nonumber\\
		=& \left\|x_{N}(\cdot,t) - x(\cdot,t)\right\|_{L^2(I)} \int_{0}^{1}\left\|W_{N}(\cdot,y')\right\|_{L^2(I)}\mathrm{d}y' + \left\|x(\cdot,t)\right\|_{L^2(I)} \int_{0}^{1}\left\|W_{N}(\cdot,y') - W(\cdot,y')\right\|_{L^2(I)}\mathrm{d}y'\nonumber\\
		=& \left\|x_{N}(\cdot,t) - x(\cdot,t)\right\|_{L^2(I)} \left\|\left\|W_{N}(\cdot,y')\right\|_{L^2(I)}\right\|_{L^1(I)} + \left\|x(\cdot,t)\right\|_{L^2(I)} \left\|\left\|W_{N}(\cdot,y') - W(\cdot,y')\right\|_{L^2(I)}\right\|_{L^1(I)} \nonumber\\
		\leq& \left\|x_{N}(\cdot,t) - x(\cdot,t)\right\|_{L^2(I)} \left\|\left\|W_{N}(\cdot,y')\right\|_{L^2(I)}\right\|_{L^2(I)} + \left\|x(\cdot,t)\right\|_{L^2(I)} \left\|\left\|W_{N}(\cdot,y') - W(\cdot,y')\right\|_{L^2(I)}\right\|_{L^2(I)} \nonumber\\
		=& \left\|x_{N}(\cdot,t) - x(\cdot,t)\right\|_{L^2(I)}\left\|W_{N}\right\|_{L^2(I^2)} + \left\|x(\cdot,t)\right\|_{L^2(I)}\left\|W_{N} - W\right\|_{L^2(I^2)}.
	\end{align}
	Because $I$ is of finite measure, $\left\|x(\cdot,t)\right\|_{L^2(I)} \leq\left\|x(\cdot,t)\right\|_{L^\infty(I)}$ holds true. By applying this inequality, Remark \ref{corol_1}, Theorem \ref{thm:convergence}, and the condition $\lim_{N \to \infty} \left\|W_{N} - W\right\|_{L^2(I^2)} = 0$ to \eqref{eq:L1_Cauchy}, we obtain
	\begin{align}
		\lim_{N\to\infty} \max_{t \in [0,T]}\left|\int_{0}^{1} \int_{0}^{1}\left(W_{N}(y, y') x_{N}(y,t)\mathrm{d}y\mathrm{d}y' - W(y,y') x(y,t)\right)\mathrm{d}y\mathrm{d}y'\right| = 0.
	\end{align} 
	Therefore, we obtain
	\begin{align}
		\lim_{N\to\infty} \max_{t \in [0,T]} \left|\mathcal{L}_{N}(\pmb{x}_{N}(t)) - \mathcal{L}(\pmb{x}(t))\right| =0.
	\end{align} 
	
\end{proof}	
This result shows that the GBB observable in the finite network converges to the corresponding graphon observable. 
\begin{remark}\label{rem:1}
	Similarly, by applying Assumption \ref{assumption_3} to \eqref{eq:def-s(y)}, we find that the weighted degree of the graphon, $\textbf{s}$, is also a bounded almost everywhere continuous function. Therefore, the proof of Theorem \ref{thm:dim_red_convergence} remains the same when one replaces $\pmb{x}$ by $\textbf{s}$ and use the known result that $\textbf{s}_{N} \to \textbf{s}$ as $N \to \infty$ \cite{bramburger2023pattern}, such that $\lim_{N \to \infty} \mathcal{L}_{N}({\bf s}_{N}) = \mathcal{L}(\bf{s})$ holds true. Therefore, $\beta_{\text{eff}}$ for the discrete network converges to that for the graphon.
\end{remark}
\begin{lemma}\label{lemma_2}
	Let $x_\text{eff}(t)$ be the solution of the one-dimensional reduced equation for the graphon dynamical system
	(i.e., \eqref{eq_46} but with $x_{N\text{eff}}(t)$ and $\beta_{\text{eff}}$ being replaced by $x_\text{eff}(t)$ and the graphon version of 
	$\beta_{\text{eff}}$, respectively). We assume that the initial conditions satisfy
	\begin{equation}\label{eq:1-dimen_ini}
		\lim_{N \to \infty}\left\|x_{N\text{eff}}(0) - x_\text{eff}(0)\right\|_{L^2([0,T])} = 0.
	\end{equation}
	Then, it holds true that for any fixed $T>0$
	\begin{equation}\label{eq:1-dimen_cgs}
		\lim_{N \to \infty}\max_{t \in [0,T]}\left\|x_{N\text{eff}}(t) - x_\text{eff}(t) \right\|_{L^2([0,T])} = 0.
	\end{equation}
\end{lemma}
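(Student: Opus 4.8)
The plan is to treat both $x_{N\text{eff}}$ and $x_\text{eff}$ as solutions of scalar (one-dimensional) ODEs of the form $\dot u = f(u) + \beta\,G(u,u)$, differing only in the value of the coefficient $\beta$ --- namely $\beta_{N\text{eff}} := \mathcal{L}_N(\mathbf{s}_N)$ for the finite network and $\beta_\text{eff} := \mathcal{L}(\mathbf{s})$ for the graphon --- and in their initial data, and then to control their difference by a Gr\"onwall estimate. Since Assumptions~\ref{assumption_1} and~\ref{assumption_2} make $f$ and $G$ globally Lipschitz, the map $u \mapsto f(u) + \beta\,G(u,u)$ is globally Lipschitz for each fixed $\beta$, so both initial value problems have unique solutions on all of $[0,T]$; in particular $x_\text{eff} \in C([0,T])$, so $t \mapsto G(x_\text{eff}(t), x_\text{eff}(t))$ is continuous and hence bounded on the compact interval $[0,T]$.

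First I would set $e(t) := x_{N\text{eff}}(t) - x_\text{eff}(t)$ and write
\begin{equation}
\dot e(t) = \bigl[f(x_{N\text{eff}}) - f(x_\text{eff})\bigr] + \beta_{N\text{eff}}\bigl[G(x_{N\text{eff}},x_{N\text{eff}}) - G(x_\text{eff},x_\text{eff})\bigr] + (\beta_{N\text{eff}} - \beta_\text{eff})\,G(x_\text{eff},x_\text{eff}),
\end{equation}
using the add-and-subtract identity $\beta_{N\text{eff}} G(x_{N\text{eff}},x_{N\text{eff}}) - \beta_\text{eff} G(x_\text{eff},x_\text{eff}) = \beta_{N\text{eff}}[G(x_{N\text{eff}},x_{N\text{eff}}) - G(x_\text{eff},x_\text{eff})] + (\beta_{N\text{eff}} - \beta_\text{eff}) G(x_\text{eff},x_\text{eff})$. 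The first two brackets are controlled by the Lipschitz estimates $\abs{f(x_{N\text{eff}}) - f(x_\text{eff})} \le L_f \abs{e(t)}$ and $\abs{G(x_{N\text{eff}},x_{N\text{eff}}) - G(x_\text{eff},x_\text{eff})} \le \sqrt{2}\,L_G \abs{e(t)}$, the factor $\sqrt 2$ coming from applying Assumption~\ref{assumption_2} to the diagonal pair $(u,u)$, while the last term is a forcing independent of $e$.

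Next I would collect these into $\abs{\dot e(t)} \le C_1 \abs{e(t)} + C_2 \abs{\beta_{N\text{eff}} - \beta_\text{eff}}$, where $C_1 := L_f + \sqrt 2\, L_G \sup_N \abs{\beta_{N\text{eff}}}$ and $C_2 := \max_{t\in[0,T]} \abs{G(x_\text{eff}(t),x_\text{eff}(t))}$. Here I would invoke Remark~\ref{rem:1}, which gives $\beta_{N\text{eff}} \to \beta_\text{eff}$ and hence the finiteness of $\sup_N \abs{\beta_{N\text{eff}}}$, together with the boundedness of $G(x_\text{eff},x_\text{eff})$ noted above, so that $C_1, C_2 < \infty$. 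Integrating from $0$ and applying Gr\"onwall's inequality yields
\begin{equation}
\max_{t\in[0,T]} \abs{e(t)} \le \Bigl(\abs{e(0)} + C_2 T\, \abs{\beta_{N\text{eff}} - \beta_\text{eff}}\Bigr) e^{C_1 T}.
\end{equation}
Letting $N \to \infty$, hypothesis~\eqref{eq:1-dimen_ini} forces $\abs{e(0)} = \abs{x_{N\text{eff}}(0) - x_\text{eff}(0)} \to 0$ and Remark~\ref{rem:1} forces $\abs{\beta_{N\text{eff}} - \beta_\text{eff}} \to 0$, so the right-hand side tends to $0$, giving the claimed convergence; since each solution is scalar, uniform (sup-in-$t$) convergence immediately implies the convergence written in~\eqref{eq:1-dimen_cgs}.

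The computation itself is routine continuous-dependence; the only points requiring care are (i) the uniform-in-$N$ bound $\sup_N\abs{\beta_{N\text{eff}}} < \infty$, which is exactly what the convergence in Remark~\ref{rem:1} supplies, and (ii) the boundedness of the forcing factor $G(x_\text{eff},x_\text{eff})$ on $[0,T]$, which follows from the global existence and continuity of $x_\text{eff}$ guaranteed by the Lipschitz assumptions. I would also remark that, because $x_{N\text{eff}}(t)$ and $x_\text{eff}(t)$ are scalar-valued, the $L^2([0,T])$ norm appearing under the outer maximum in the statement should be read as the pointwise absolute value at fixed $t$, so that the conclusion is precisely the uniform convergence furnished by the Gr\"onwall bound and no estimate beyond it is needed.
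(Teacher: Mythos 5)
Your proof is correct and takes essentially the same route as the paper's: subtract the two scalar ODEs, split the difference $\beta_{N\text{eff}}G(x_{N\text{eff}},x_{N\text{eff}})-\beta_\text{eff}G(x_\text{eff},x_\text{eff})$ by adding and subtracting a cross term, apply the Lipschitz assumptions, and close with Gr\"onwall's inequality combined with Remark~\ref{rem:1} and the hypothesis on the initial data. The only differences are cosmetic: your splitting is the mirror image of the paper's (the paper pairs $\beta_{N\text{eff}}-\beta_\text{eff}$ with the $N$-dependent factor $G(x_{N\text{eff}},x_{N\text{eff}})$ and $\beta_\text{eff}$ with the $G$-difference, whereas yours leaves the $N$-independent forcing $G(x_\text{eff},x_\text{eff})$, so the uniform bound you need is on $\sup_N\abs{\beta_{N\text{eff}}}$ rather than on $G$ along the $N$-dependent solutions), and you work with pointwise absolute values rather than the paper's $L^2([0,T])$ norms of scalar quantities, which is indeed the intended reading.
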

We prove this Lemma in Appendix~\ref{sup_sec: cgs 1-dimen system}.	

\subsection{Spectral reduction}	
The spectral reduction is a generalization of the GBB reduction and formulated as follows
\cite{laurence2019spectral, thibeault2020threefold, masuda2022dimension, vegue2023dimension}.
Define operator $\tilde{\mathcal{L}}_N$ by
\begin{equation}
	\tilde{\mathcal{L}}_N(\pmb{x}_N) = \sum_{i=1}^{N}a_{i}x_{i}  = \textbf{a}^\top \pmb{x}_N,
	\label{eq:def-R}
\end{equation}
where $\textbf{a} = (a_1, \ldots, a_N)$ represents the right leading eigenvector of $A^{\top}$, with the associated leading eigenvalue $\alpha_N$, normalized as $\sum_{i=1}^{N}a_{i} = 1$. Because $A = (A_{ij})$ is the adjacency matrix of the network, which we assume to be connected, $A$ is a primitive matrix. Therefore, the Perron-Frobenius theorem implies that all entries of vector $\textbf{a}$ have the same sign, which we set to be positive.	
The one-dimensional spectral reduction of \eqref{eq_1} is given by
\begin{equation}
	\frac{dR_N}{dt} = f(R_N) + \alpha_N G(R_N, R_N),
	\label{eq:SM-original}
\end{equation}	
where $R_N = \tilde{\mathcal{L}}_N(\pmb{x})$ is an observable and analogous to $x_{N\text{eff}}$ in the GBB reduction. The original spectral reduction uses
$G(\tilde{\beta} R_N, R_N)$ with a network-dependent constant $\tilde{\beta}$ instead of $ G(R_N, R_N)$ on the right-hand side of \eqref{eq:SM-original} \cite{laurence2019spectral, thibeault2020threefold}. However, we use \eqref{eq:SM-original} because
this form is directly derived from the standard Taylor expansion and attains a good accuracy \cite{masuda2022dimension}.

For the solution of the graphon dynamical system, $x(y, t)$, we consider the continuum limit of \eqref{eq:def-R} to define
\begin{equation}
	R: = \tilde{\mathcal{L}}(\pmb{x}) = \int_{0}^{1}a(y)x(y,t)\,\mathrm{d}y,
\end{equation}
with $\int_0^1a(y)\,\mathrm{d}y=1$. Specifically, we consider the case in which $a(y)$ is the leading eigenfunction of the graphon $W$, i.e., the eigenfunction associated with the leading eigenvalue of $W$, denoted by $\alpha$.
By following the derivation of the spectral reduction for finite networks \cite{laurence2019spectral, thibeault2020threefold},
we obtain the same one-dimensional form, \eqref{eq:SM-original}, in which we replace $R_N$ and $\alpha_N$ by $R$ and $\alpha$, respectively. We show the derivation in Appendix~\ref{sec:derivation-SM-graphon}.

\section{Numerical results}\label{sec_num-res}

In this section, we numerically analyze the accuracy of the dimension reduction methods on graphon dynamical systems. We use six dynamical systems and six types of graphons for each dynamical system.
A graphon dynamical system, \eqref{eq4}, is an integro-differential equation. To find their solutions numerically, we use the methods described in Ref.~\cite{day1967note}. We first compute the integral involving the graphon using the Simpson's rule. Once we compute the integral term, the integro-differential equation reduces to a normal ODE, which we solve by the Runge-Kutta method of degree 4.

\subsection{Dynamical system models}

Here we describe the six dynamical system models used, focusing on their graphon form.

The susceptible-infectious-susceptible (SIS) model represents spreading of endemic diseases in a population.
Its deterministic, graphon form is given by \cite{vizuete2020graphon, delmas2022infinite}
\begin{equation}\label{eq_21}
	\dfrac{\partial x(y,t)}{\partial t} = -\mu x(y,t) + D \int_0^1W(y,y')(1-x(y,t))x(y',t)\mathrm{d}y',
\end{equation}
where $x(y,t)$ is the probability that node $y$ is infected at time $t$; $\mu$ represents the recovery rate; $D$ represents the infection rate. 
The first term on the right-hand side of \eqref{eq_21} represents the recovery. The second term represents the rate at which node $y$ is infected by node $y'$. Without loss of generality, we set $\mu = 1$; multiplying $\mu$ and $D$ by a common positive constant does not change the equilibria of \eqref{eq_21}, which GBB and spectral reductions approximate. We use the initial condition $\pmb{x}(0) = 1$. Equation~\eqref{eq_21} is a case of \eqref{eq4} with $f(x) = - \mu x$ and $G(x, x') =  D (1-x(y))x'$. The GBB reduction of \eqref{eq_21} is given by
\begin{equation}\label{eq_21a}
	\dfrac{\mathrm{d} x_{\text{eff}}}{\mathrm{d}t} = - \mu x_{\text{eff}} + D \beta_{\text{eff}}(1-x_\text{eff})x_{\text{eff}}.
\end{equation}
The spectral reduction is given by \eqref{eq_21a} with $x_\text{eff}$ and $\beta_{\text{eff}}$ being replaced by $R$ and $\alpha$, respectively. This correspondence also holds true for the other dynamical systems. Therefore, for the following five dynamical systems, we only show the GBB reduction.

The coupled double-well system represents interacting bistable dynamics on nodes \cite{brummitt2015coupled, kronke2020dynamics, wunderling2020motifs}. Its graphon form is given by
\begin{equation}\label{eq_22}
	\dfrac{\partial x(y,t)}{\partial t}  = -(x(y,t) - r_1)(x(y,t) - r_2)(x(y,t) - r_3) + D\int_0^1W(y,y')x(y',t) \mathrm{d}y',
\end{equation}
where $r_1, r_1, r_3$ are constants such that $r_1<r_2<r_3$ and $D$ is the coupling strength. 
We set $r_1 = 1, r_2 = 2$, and $r_3 = 5$. When the nodes are isolated, this system has two stable equilibria, one at $x^{*}< r_1$, referred to as the lower state, and $x^{*}>r_3$, referred to as the upper state; they can be switched between via saddle-node bifurcations. We initialize the dynamics at two initial conditions. The first initial condition is near the lower state, i.e., $\pmb{x}(0) = 0$. The second initial condition is near the upper state, i.e., $\pmb{x}(0) = 5$. If the nodes are initially in their lower states and $D$ gradually increases, the states of various nodes are expected to jump to their upper states, presumably via saddle-point bifurcations~\cite{kundu2022mean}. Different nodes may jump at different values of $D$ depending on node index $y$. The GBB reduction of the coupled double-well system is given by
\begin{equation}\label{eq_23}
	\dfrac{\mathrm{d} x_\text{eff}}{\mathrm{d}t} = -(x_\text{eff} - r_1)(x_\text{eff} - r_2)(x_\text{eff} - r_3) + D\beta_\text{eff}x_\text{eff}.
\end{equation}

The graphon version of a gene-regulatory dynamics model, which has been employed in particular in studies of GBB and spectral reductions~\cite{gao2016universal}, is given by
\begin{equation}\label{eq_25}
	\dfrac{\partial x(y,t)}{\partial t} = -B x^f(y,t) + D \int_{0}^{1}W(y,y')\frac{x^h(y,t)}{1+x^h(y',t)}\mathrm{d}y'.
\end{equation}
We set $B = 1, f = 1$, and $h = 2$ by following Ref.~\cite{gao2016universal}. In \eqref{eq_25}, $x(y,t)$ represents the expression level of gene $y$. The first term on the right-hand side represents the degradation of gene expression. The second term represents the activation of gene $y$ by gene $y'$. We initialize the dynamics at the lower state, i.e., $\pmb{x}(0) = 0$, or the upper state, i.e., $\pmb{x}(0) = 10$. The GBB reduction of \eqref{eq_25} is given by
\begin{equation}\label{eq_26}
	\dfrac{\mathrm{d} x_\text{eff}}{\mathrm{d}t} = -Bx_\text{eff}^f + D\beta_\text{eff}\frac{x_\text{eff}^h}{1+x_\text{eff}^h}.
\end{equation}

The generalized Lotka-Volterra (GLV) model represents the dynamics of multi-species interaction \cite{tu2017collapse, gonze2018microbial}. Its graphon form is given by
\begin{equation}\label{eq_51}
	\dfrac{\partial x(y,t)}{\partial t} = \tilde{\alpha}x(y,t) -c x(y,t)^2 + D \int_{0}^{1}W(y,y')x(y,t)x(y',t)\mathrm{d}y',
\end{equation}
where $x(y,t)$ represents the abundance of species $y$; $\tilde{\alpha}$ is the intrinsic growth rate of the species; $c$ is a constant; $D$ is the coupling strength. By following the same algebra as in Ref.~\cite{kundu2022accuracy}, we obtain $x(y,t) = -\tilde{\alpha}(DF - c)^{-1}\mathds{1}$,  where $F: C(L^2(I), [0,T]) \to C(L^2(I), [0,T])$ is defined by $F(x(y,t)) = \int_{0}^{1}W(y,y')x(y',t)\mathrm{d}y'$, and $\mathds{1}$ is the constant function defined by $\mathds{1}(y) = 1$, $\forall y\in [0, 1]$. To obtain a finite solution, we need $c > D\alpha$, and we consider the range of $D\alpha \in [0,1]$ in the numerical simulation of this model. Therefore, we set $c = 1.1$. We set $\tilde{\alpha} = 0.5$\cite{tu2017collapse}. 
We initialize the dynamics by $\pmb{x}(0) = 0$. The GBB reduction of the GLV model is given by
\begin{equation}\label{eq_24a}
	\dfrac{\mathrm{d} x_\text{eff}}{\mathrm{d}t} = \alpha x_\text{eff} + (D\beta_\text{eff}-c)x_\text{eff}^2.
\end{equation}

We use a dynamical system of the cooperative mutualistic interactions between species in an ecosystem \cite{gao2016universal}.  The graphon version of this dynamics is given by
\begin{align}
	\dfrac{\partial x(y,t)}{\partial t} &= B + x(y,t)\left[1-\frac{x(y,t)}{K}\right] \left[\frac{x(y,t)}{\tilde{C}} -1\right] \nonumber\\  
	&+ D \int_{0}^{1}W(y,y')\frac{x(y,t)x(y',t)}{\tilde{D} + E x(y,t)+ H x(y',t)}\mathrm{d}y',
	\label{eq_27}
\end{align}
where $x(y,t)$ represents the abundance of species $y$. Term $B$ represents the constant migration rate of species $y$ from outside the ecosystem; the second term on the right-hand side of \eqref{eq_27} represents the logistic growth with the carrying capacity $K$ and  the Allee constant $\tilde{C}$; the third term represents mutualistic interaction, i.e., the influence of $x(y',t)$ on $x(y,t)$; $\tilde{D}$, $E$, and $H$ are additional constants. We set $B = 0.1$, $\tilde{C} = 1$, $\tilde{D} = 5$, $E = 0.9$, $H = 0.1$, and $K=5$ by following Ref.~\cite{gao2016universal}. We initialize the dynamics at the lower state, i.e., $\pmb{x}(0) = 0$, or the upper state, i.e., $\pmb{x}(0) = 10$. The GBB reduction of \eqref{eq_27} is given by
\begin{equation}\label{eq_28}
	\dfrac{\mathrm{d} x_{\text{eff}}}{\mathrm{d}t} = B + x_{\text{eff}}\left(1-\frac{x_{\text{eff}}}{K}\right)\left(\frac{x_{\text{eff}}}{\tilde{C}} -1\right) 
	+ \frac{D\beta_{\text{eff}} x_{\text{eff}}^2}{\tilde{D} + E x_\text{eff}+ H x_\text{eff}}.
\end{equation}

The Wilson-Cowan model describes the firing rates of populations of synaptically coupled excitatory and inhibitory neurons~\cite{wilson1973mathematical}. The graphon variant of the 
Wilson-Cowan model~\cite{bramburger2024persistence} is given by
\begin{align}\label{eq_29}
	\dfrac{\partial x(y,t)}{\partial t} =-x(y,t) + D \int_{0}^{1}\frac{W(y,y')}{1 + \exp[\mu - \delta(x(y',t))]}\mathrm{d}y',
\end{align}
where $x(y, t)$ represents the excitation level of neuron $y$, which decays due to the linear self-interaction term and is sustained by couplings to other nodes; $\mu$ and $\delta$ are parameters that define the activation threshold.
We set $\mu = 3$ and $\delta = 1$ \cite{laurence2019spectral}. We initialize the dynamics at the lower state, i.e., $\pmb{x}(0) = 0$, or the upper state, i.e., $\pmb{x}(0) = 8$. 
The GBB reduction of \eqref{eq_29} is given by
\begin{equation}\label{eq_30}
	\dfrac{\mathrm{d} x_{\text{eff}}}{\mathrm{d}t} = -x_\text{eff} +  \frac{D\beta_\text{eff}}{1 + \exp[\mu - \delta(x_\text{eff})]}.
\end{equation}
\begin{remark}
	For each of the six dynamical systems considered here, and for all the parameter ranges used for our numerical simulation, the following holds true: we can find a bounded interval $\Omega \subset \mathbb{R}$ such that the induced set $S \subset L^q(I)$ which takes values in $\Omega$ almost everywhere is positively invariant for the corresponding graphon dynamical system.
\end{remark}

\subsection{Graphons}

We use the following six graphons in our numerical simulations.
They are used in the previous literature on graphon dynamical systems or capture some features of real-world networks. 

\begin{figure}[h]\label{fig_1}
	\centering\includegraphics[scale=0.5]{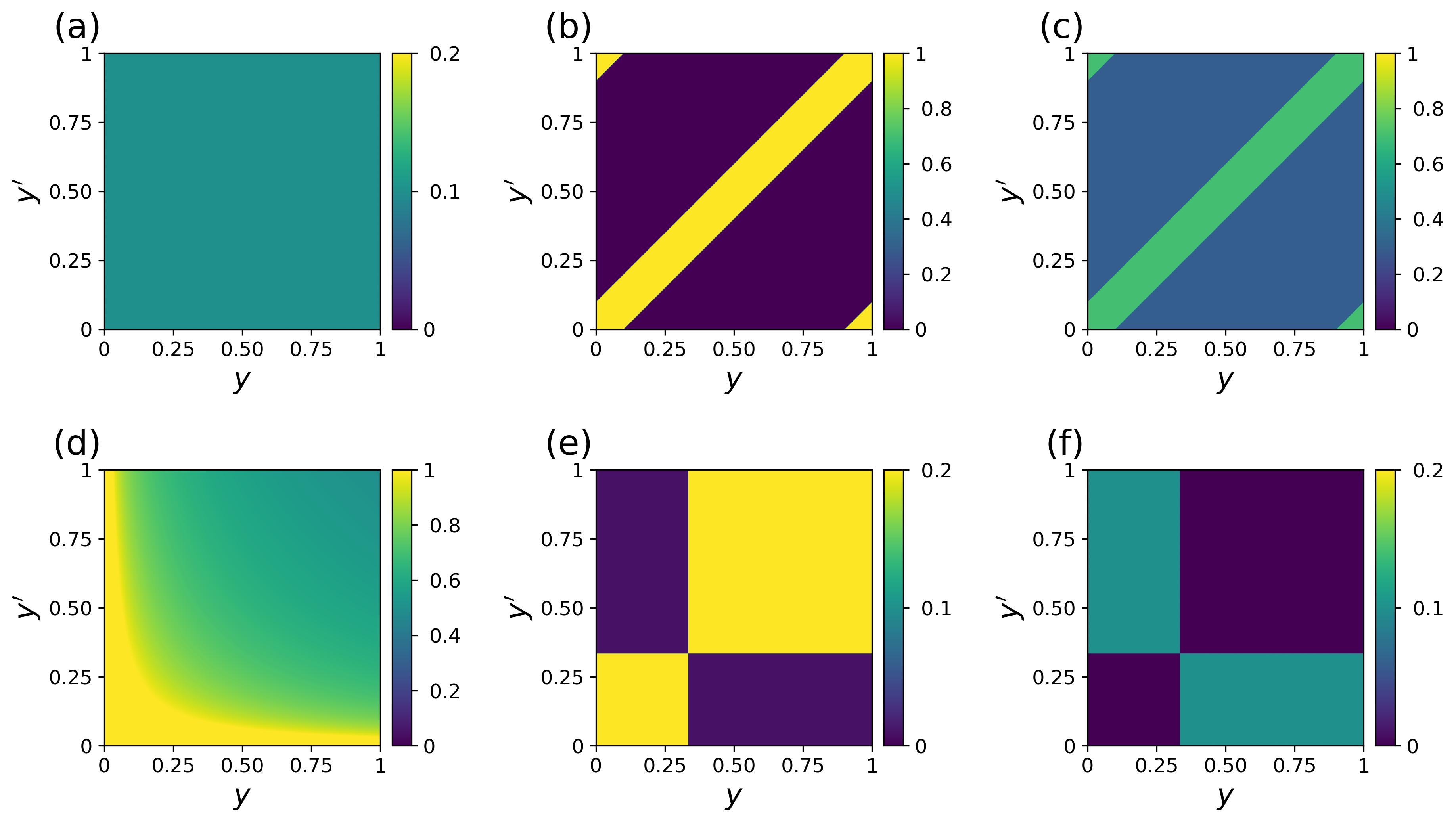}
	\caption{Visualization of the graphons used in the present study. (a) ER graphon with $p = 0.1$. (b) Ring graphon with $q=1/3$. (c) Small-world graphon with $p = 0.1$ and $q = 1/3$. (d) Power-law graphon with $C= 0.5$ and $\nu = -0.2$. (e) Modular graphon with $\gamma = 1/3$, $p_{\rm in} = 0.2$, and $p_{\rm out} = 0.01$. (f) Bipartite graphon with $\gamma = 1/3$ and $p = 0.1$.}
	\label{fig_graphonkernel}
\end{figure}

The Erd\H{o}s-R\'enyi (ER) grahpon (see Fig.~\ref{fig_graphonkernel}(a) for a visualization) is given by 
\begin{equation}
	W(y,y') = p,
\end{equation}
where $0\le p\le1$ is the probability of an edge between any pair of nodes~\cite{chiba2016mean, bramburger2024persistence} (also called the $1$-block graphon \cite{borgs2018revealing}). Its step graphon is the ER random graph. We set $p=0.1$.

The ring graphon (see Fig.~\ref{fig_graphonkernel}(b)) models a ring network in which edges only connect node pairs whose distance along the circumference of the ring is smaller than a threshold, $q$ \cite{medvedev2014nonlinear, bramburger2023pattern, bramburger2024persistence}. The ring graphon is given by
\begin{equation}\label{eq_1**}
	W(y,y') = \begin{cases*}
		1 & if  $\min\{|y-y'|, 1-|y-y'|\}\leq q$, \\
		0 & otherwise,
	\end{cases*} 
\end{equation}
where $0<q<1$. We set $q = 1/3$.

The small-world graphon \cite{medvedev2014nonlinear, kuehn2019power} (see Fig.~\ref{fig_graphonkernel}(c)) is a graphon approximation to the Watts-Strogatz small-world network model~\cite{watts1998collective}. In this model, nodes are placed on the unit-circumference circle and locally connected with a large probability similar to the case of the ring graphon. In addition, each node is connected to any other node with a small probability. The small-world graphon is given by
\begin{equation}
	W(y,y')=(1-p)W_q(y,y')+p\left[1-W_q(y,y')\right],
\end{equation}
where $W_q$ represents the ring graphon given by \eqref{eq_1**}, and $0\le p\le0.5$.
We obtain a ring graphon if $p=0$ and an ER graphon if $p=0.5$. We set $p=0.1$ and $q = 1/3.$	

The power-law graphon (see Fig.~\ref{fig_graphonkernel}(d)) is defined by $W(y,y')= \min\{C(yy')^{\nu}, 1\}$, where $0 < C<1$ and $-0.5 < \nu < 0$. The present power-law graphon is similar to the model proposed in Refs.~\cite{van2018sparse, medvedev2020kuramoto}. The weighted degree of the node for any $y$ is given by \eqref{eq:def-s(y)}. Because $W(y,y') = 1$ if $y'< C^{-\frac{1}{\nu}} y^{-1}$ and $W(y,y') = C(yy')^{-\nu}$ if $y'\ge C^{-\frac{1}{\nu}} y^{-1}$, we obtain
\begin{align}\label{eq:deg_distribution}
	s(y) &= \int_{0}^{C^{-\frac{1}{\nu}} y^{-1}} \mathrm{d}y' + \int_{C^{-\frac{1}{\nu}} y^{-1}}^{1}C\left(yy'\right)^{-\nu}\mathrm{d}y'\nonumber\\
	&= \frac{C^{-\frac{1}{\nu}}y^{-1} \nu + Cy^{\nu}}{1+\nu}.
\end{align} 
Because we have assumed that $-0.5 < \nu < 0$,	we obtain
\begin{align}
	s(y) \propto \frac{1}{y}
\end{align}
as $y\to 0$, where $\propto$ represents ``in proportion to''.	
Because $y \in [0,1]$ is uniformly distributed, we obtain $p(y) = 1, \forall y \in [0,1]$. By substituting this in
\begin{equation}
	p(s) =  p(y)\frac{\mathrm{d}y}{\mathrm{d}s}
\end{equation}
and using \eqref{eq:deg_distribution}, we obtain
%
%
\begin{equation}
	p(s) = \frac{\nu +1}{\nu s^{3+\nu} \left[-C^{-\frac{1}{\nu}}s^{-(1 +\nu)}+ C \right]} \propto s^{-(3+\nu)}
\end{equation}
as $s\to\infty$, equivalently $y\to 0$, implying a power-law degree distribution. We set $C = 0.5$ and $\nu = -0.2$.

The modular graphon \cite{klimm2022modularity} models a network with community structure. In networks with community structure,
nodes in the same community are more likely to be adjacent to each other than nodes in different communities are. We consider a modular graphon with two modules (i.e., communities) by dividing $[0,1]$ into $[0,\gamma)$ and $[\gamma,1]$, where $\gamma \in (0, 1)$. We denote the probability of an edge within the same community and between different communities by $p_{\text{in}}\in[0,1]$ and $p_\text{out} \in [0,1]$, respectively,  with $p_\text{in} \gg p_\text{out}$. The modular graphon (see Fig.~\ref{fig_graphonkernel}(e)) is defined by
\begin{equation}
	W(y,y') = \begin{cases}
		p_{\text{in}} & \text{if } y,y' \in [0,\gamma) \text{ or } y,y' \in [\gamma,1],\\
		p_{\text{out}} & \text{otherwise}.
	\end{cases} 
\end{equation}
We set $p_{\text{in}}=0.2$, $p_{\text{out}}=0.01$, and $\gamma= 1/3$.

By definition, a bipartite network consists of two subsets of nodes, and the edges exist only between two nodes in the different subsets.
The bipartite graphon \cite{bramburger2024persistence}
consists of the interval $[0,1]$ partitioned into two segments, $[0,\gamma)$ and $[\gamma,1]$, corresponding to the two subsets of nodes, where $\gamma\in (0,1)$. 
The bipartite graphon (see Fig.~\ref{fig_graphonkernel}(f)) is defined by
\begin{equation}
	W(y,y') = \begin{cases}
		p & \text{if } y < \gamma \le y' \text{ or } y' < \gamma \le y,\\
		0 & \text{otherwise}.
	\end{cases}
\end{equation}
We set $p=0.1$ and $\gamma = 1/3$.

\subsection{Error measurement}

We measure the relative error, denoted by $\text{RE}_{\text{GBB}}$, between the two dimension reduction methods and the full graphon dynamical system as follows. We compute the value of $D\beta_{\text{eff}}$, $\mathcal{L}(x)$, and $x_\text{eff}$  by varying the coupling strength $D$ of each dynamics model. We compute the relative error for the GBB reduction at each $D$ value by
\begin{equation}\label{eq_31}
	\text{RE}_{\text{GBB}} = \left|\frac{\mathcal{L}(\pmb{x}) - x_\text{eff}}{\mathcal{L}(\pmb{x})}\right|.
\end{equation}
We only use the equilibrium values of the dynamics to evaluate the relative error.
The relative error for the spectral method is calculated in the same manner, with the replacement of $\mathcal{L}$ by $\tilde{\mathcal{L}}$ and $x_{\text{eff}}$ by $R$.
To obtain the relative error across a range of $D$ values, we integrate \eqref{eq_31} over $D\beta_{\text{eff}} \in [0, 20]$ and 
$D\alpha \in [0, 20]$ for the GBB and spectral reductions, respectively.

\clearpage
\subsection{Numerical results}

In the panels on the left column in Fig.~\ref{fig_SIS_Model}, we compare our one-dimensional observable $\mathcal{L}(x)$ 		computed directly from the SIS model, \eqref{eq_21}, and the solution $x_\text{eff}$ of the GBB reduction, \eqref{eq_21a}, as a function of $D\beta_\text{eff}$. The panels on the right column of the same figure show
the corresponding results for the spectral reduction. Each figure panel represents a pair of reduction method (i.e., GBB or spectral) and one of the six graphons. The figure indicates that both GBB and spectral reductions provide an accurate approximation across all graphons.

\begin{figure}[h!]
	\centering
	\includegraphics[scale=0.48]{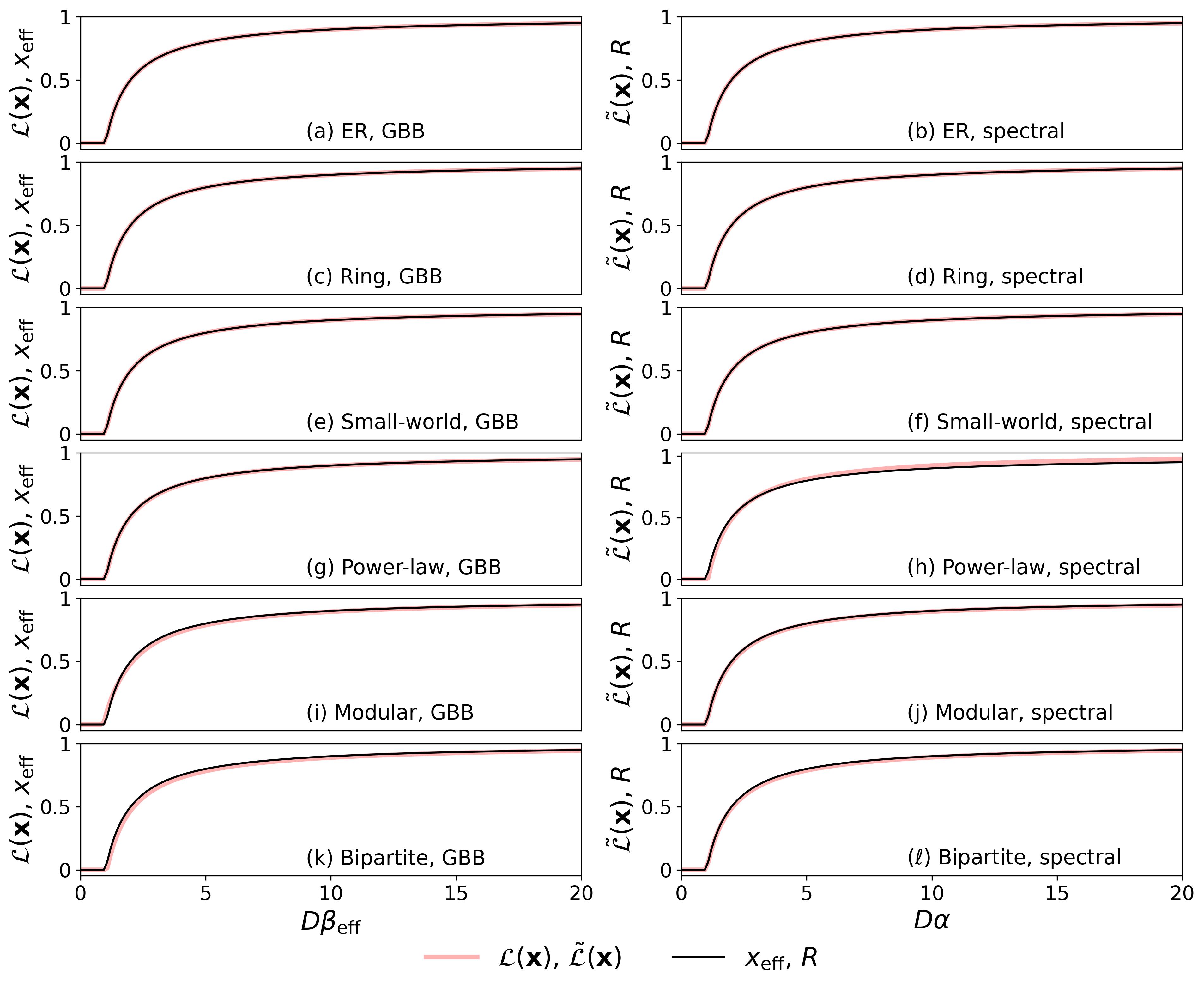}
	\caption{Accuracy of the two dimension reduction methods for the SIS dynamics.
		(a) ER graphon, GBB reduction. (b) ER graphon, spectral reduction. (c) Ring, GBB. (d) Ring, spectral. (e) Small-world, GBB. (f) Small-world, spectral. (g) Power-law, GBB. (h) Power-law, spectral. (i) Modular, GBB. (j) Modular, spectral. (k) Bipartite, GBB. (l) Bipartite, spectral. The thick semi-transparent red lines represent the numerically obtained one-dimensional observable obtained from simulations of the full graphon dynamical system. The thin black lines represent the equilibria of the GBB or spectral reduction.}
	\label{fig_SIS_Model}
\end{figure}
\clearpage

In Fig.~\ref{fig_DW_Model}, we show similar comparison results for the coupled double-well dynamics. We find that
both GBB and spectral reductions are satisfactory for the ER, ring, and small-world graphons. For the other three graphons, both GBB and spectral reductions are inaccurate at locating the bifurcation points to different extents.

\begin{figure}[h!]
	\centering
	\includegraphics[scale=0.48]{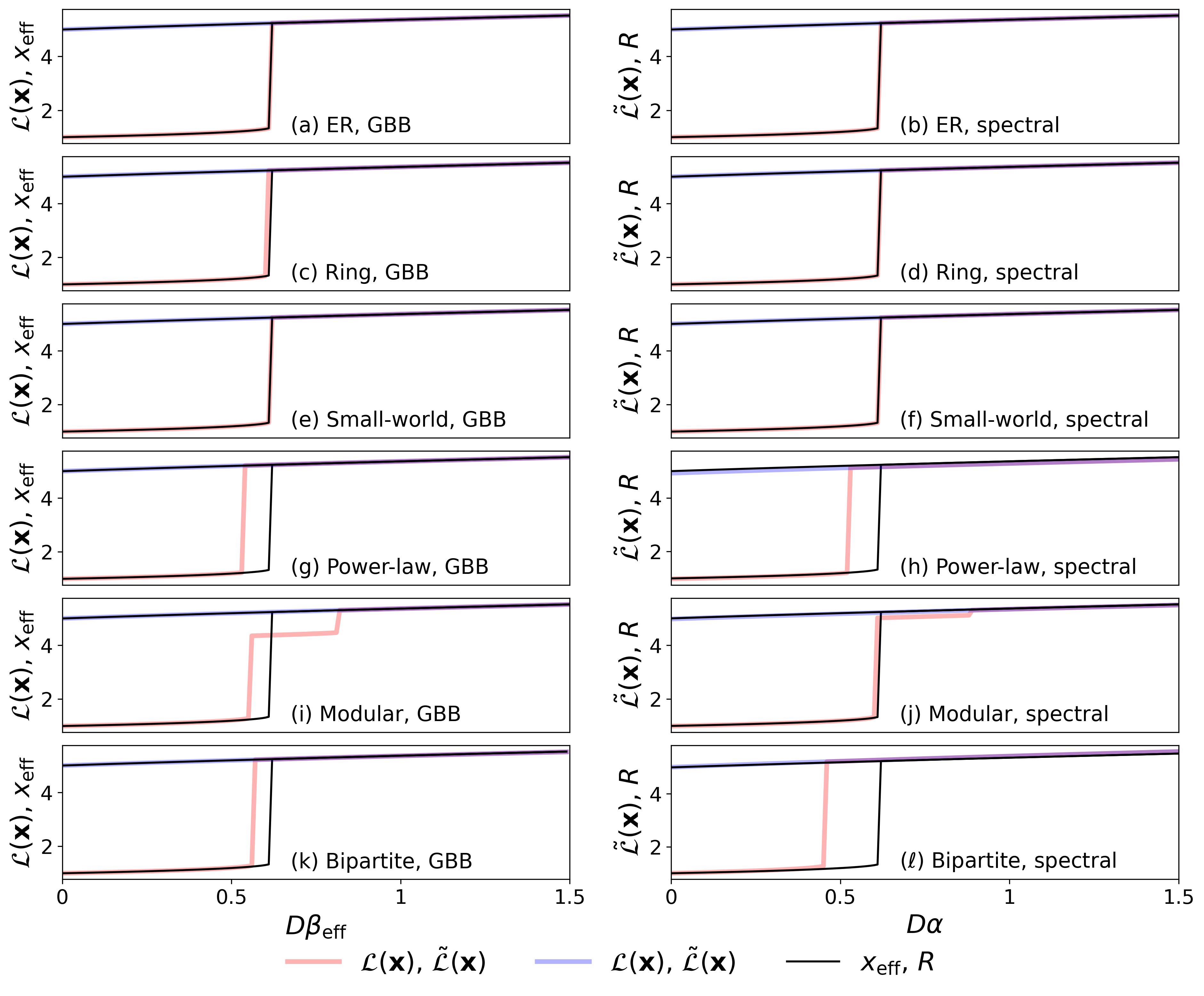}
	\caption{Accuracy of the two dimension reduction models for the coupled double-well dynamics. (a) ER, GBB. (b) ER, spectral. (c) Ring, GBB (d) Ring, spectral. (e) Small-world, GBB. (f) Small-world, spectral. (g) Power-law, GBB. (h) Power-law, spectral. (i) Modular, GBB. (j) Modular, spectral. (k) Bipartite, GBB. (l) Bipartite, spectral. The thick semi-transparent red and blue lines represent the numerically obtained one-dimensional solutions when the initial states are lower and upper, respectively. The thin black lines represent the equilibria of the GBB or spectral reductions with the lower and upper initial conditions altogether.}
	\label{fig_DW_Model}
\end{figure}

Figure \ref{fig_GR_Model} shows the results for the gene-regulatory dynamics. We find that the GBB reduction provides an accurate approximation across all the six graphons. In contrast, the spectral reduction is sufficiently accurate only for the ER, ring, power-law, and modular graphons.

\clearpage

\begin{figure}[h!]
		\centering
		\includegraphics[scale=0.48]{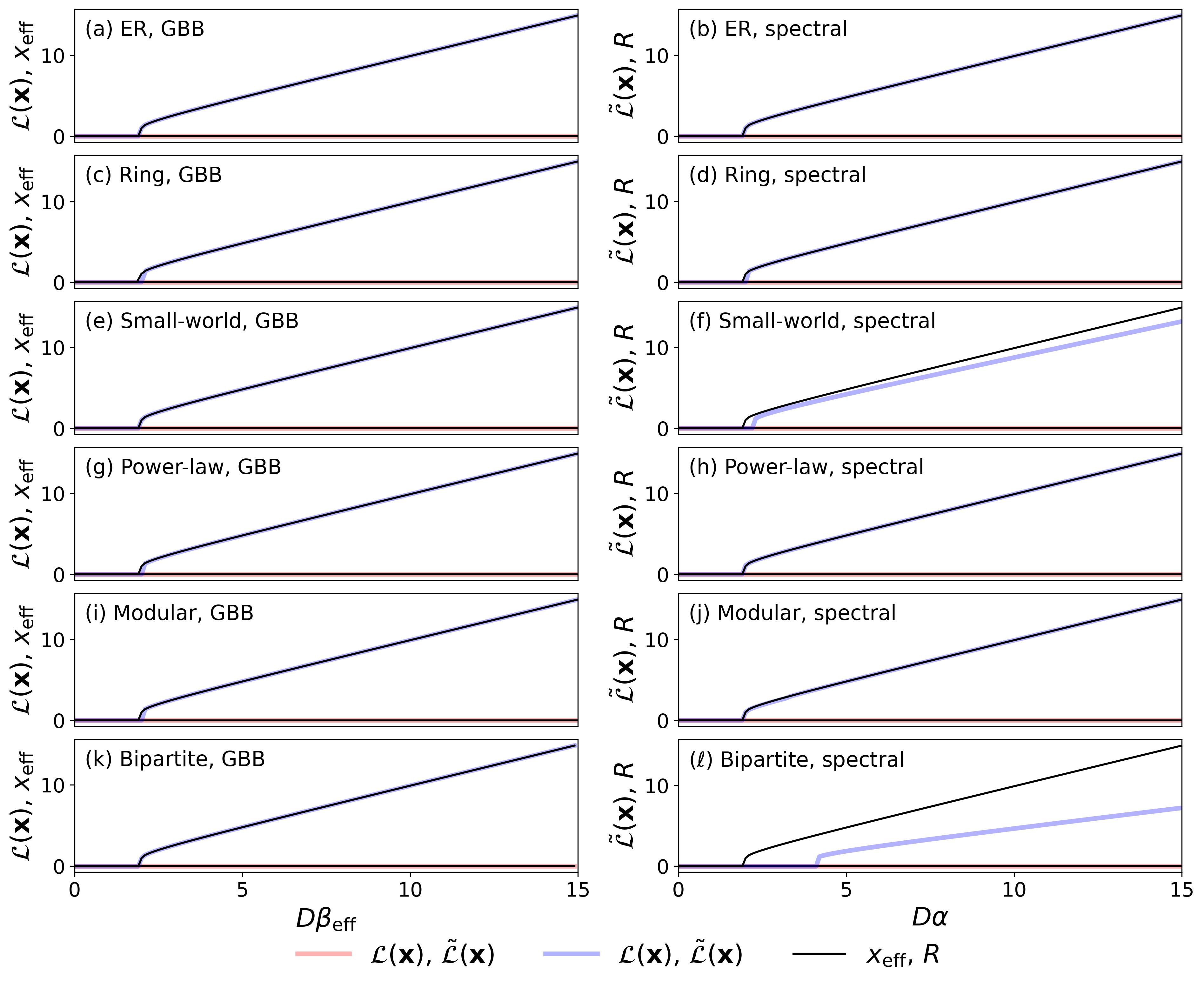}
		\caption{Accuracy of the two dimension reduction models for the gene-regulatory dynamics. See the caption of Fig.~\ref{fig_DW_Model} for the legends.
		}
		\label{fig_GR_Model}
	\end{figure}
	
	Figure~\ref{fig_GLV_Model} shows the results for the GLV model. We observe that the spectral reduction is sufficiently accurate except in the case of the power-law graphon. In contrast, the GBB reduction produces notable error at large coupling strengths for the modular and bipartite graphons.
	
	\clearpage
	\begin{figure}[h!]
		\centering
		\includegraphics[scale=0.48]{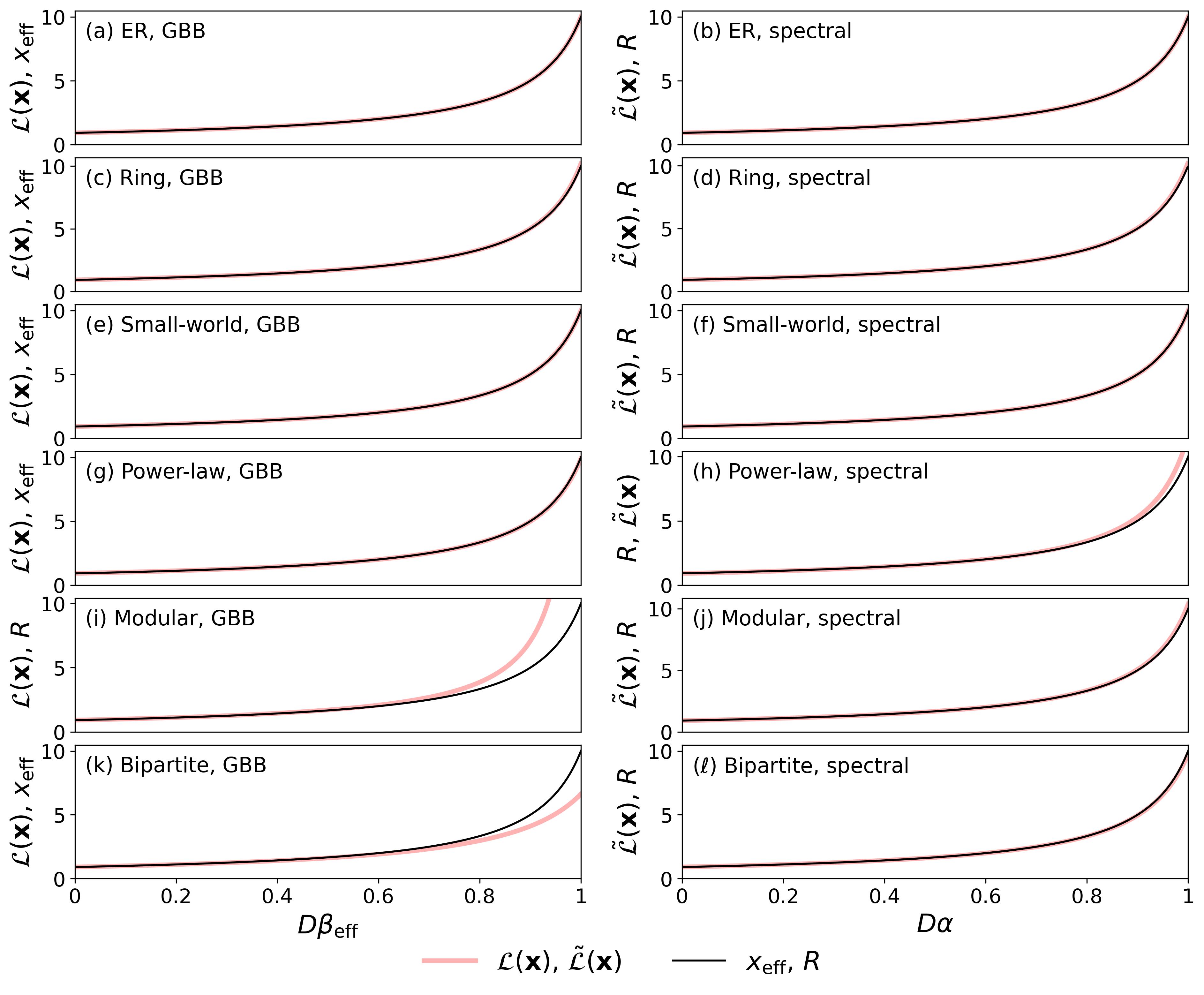}
		\caption{Accuracy of the two dimension reduction models for the GLV dynamics. See the caption of Fig.~\ref{fig_SIS_Model} for the legends.
		}
		\label{fig_GLV_Model}
	\end{figure}
	
	Figure~\ref{fig_Mutualistic_Model} shows the results for the mutualistic interaction dynamics. The results are similar to those for the coupled double-well dynamics in the sense that both GBB and spectral reductions are accurate for the ER, ring, and small-world graphons and not for the power-law, modular, and bipartite graphons. However, the approximation error is visibly larger for the ring graphon in the case of the present dynamics model than the coupled double-well dynamics. The accuracy of the GBB and spectral reductions across the six graphons is apparently similar.
	
	\clearpage
	\begin{figure}[h!]
		\centering
		\includegraphics[scale=0.48]{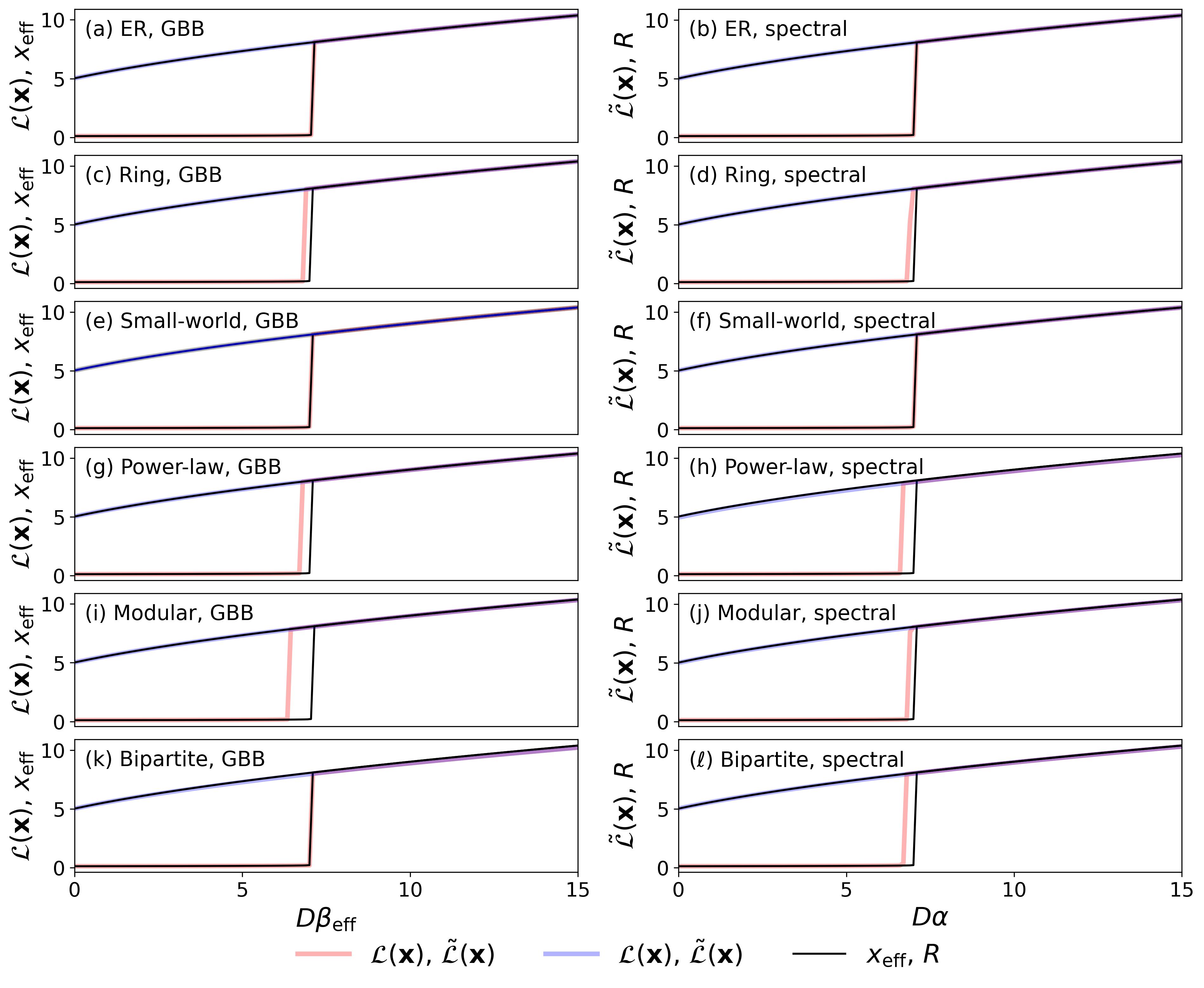}
		\caption{Accuracy of the two dimension reduction models for the mutualistic interaction dynamics. See the caption of Fig.~\ref{fig_DW_Model} for the legends.}
		\label{fig_Mutualistic_Model}
	\end{figure}
	
	Figure~\ref{fig_WC_Model} shows the results for the Wilson-Cowan model. The GBB reduction is reasonably accurate for all but the modular and bipartite graphons. In contrast, the spectral reduction is reasonably accurate for all but the power-law graphon.
	
	\clearpage
	\begin{figure}[h!]
		\centering
		\includegraphics[scale=0.48]{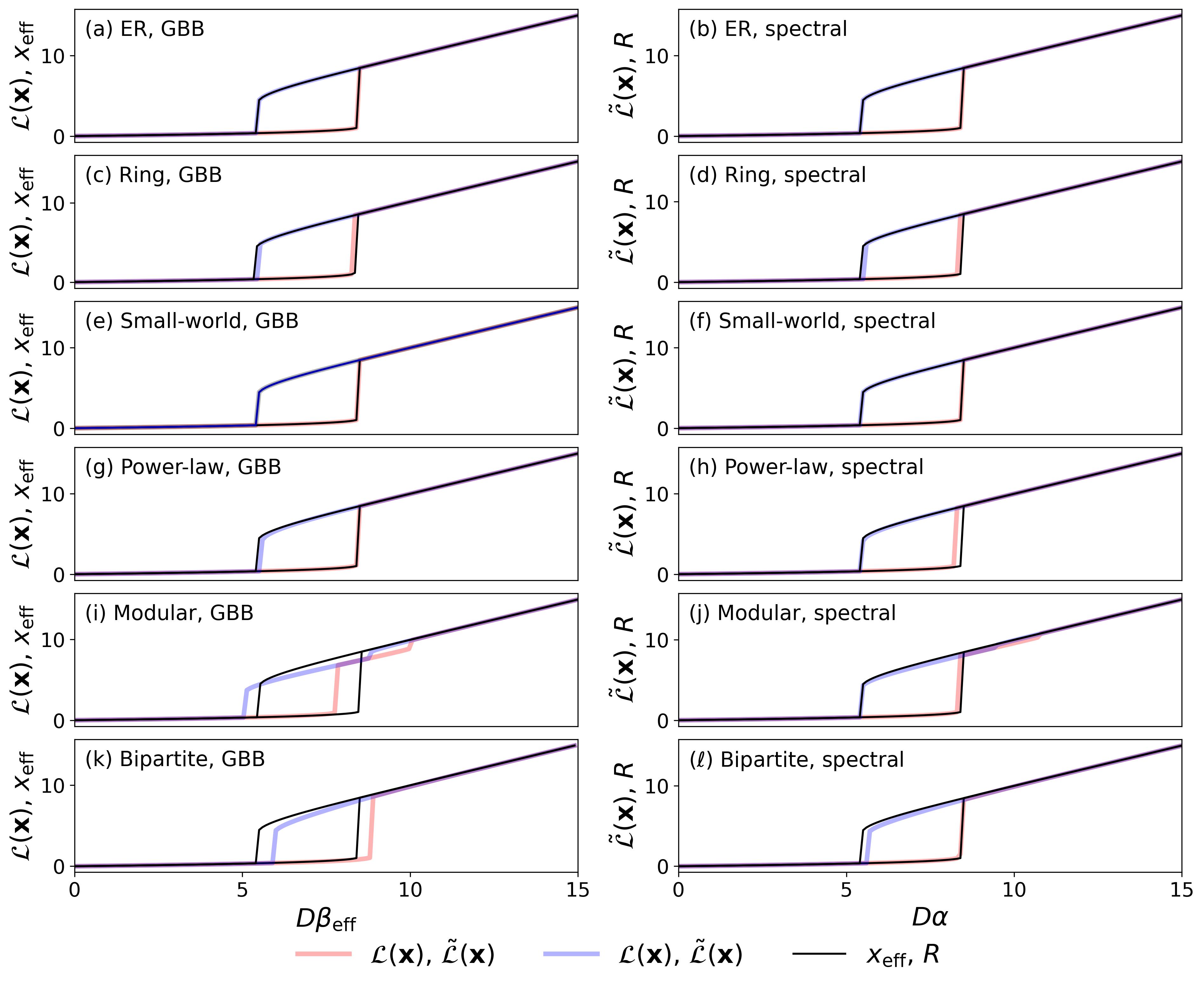}
		\caption{Accuracy of the two dimension reduction models for the Wilson-Cowan dynamics. See the caption of Fig.~\ref{fig_DW_Model} for the legends.
		}
		\label{fig_WC_Model}
	\end{figure}

	To summarize these results across the six dynamics models and six graphons, we show in Fig.~\ref{fig_RE_graphon} the relative error for each combination of dynamics model, graphon, and the initial condition if there are two (i.e., lower and upper) initial conditions considered for the chosen dynamics. Each panel represents a dynamics model. The horizontal and vertical axes represent the relative error for the GBB and spectral reductions, respectively. Figure~\ref{fig_RE_graphon} verifies that the ER, ring, and small-world graphons tend to yield small error across the dynamics and initial conditions. Another key observation is that, across the dynamics, graphons, and initial conditions, the GBB and spectral reductions are not particularly more accurate than each other, while one reduction method tends to be better than the other in some dynamics (see Fig.~\ref{fig_RE_graphon}(b) and (c)). 
	
	\clearpage
	\begin{figure}[h!]
		\centering
		\includegraphics[scale=0.35]{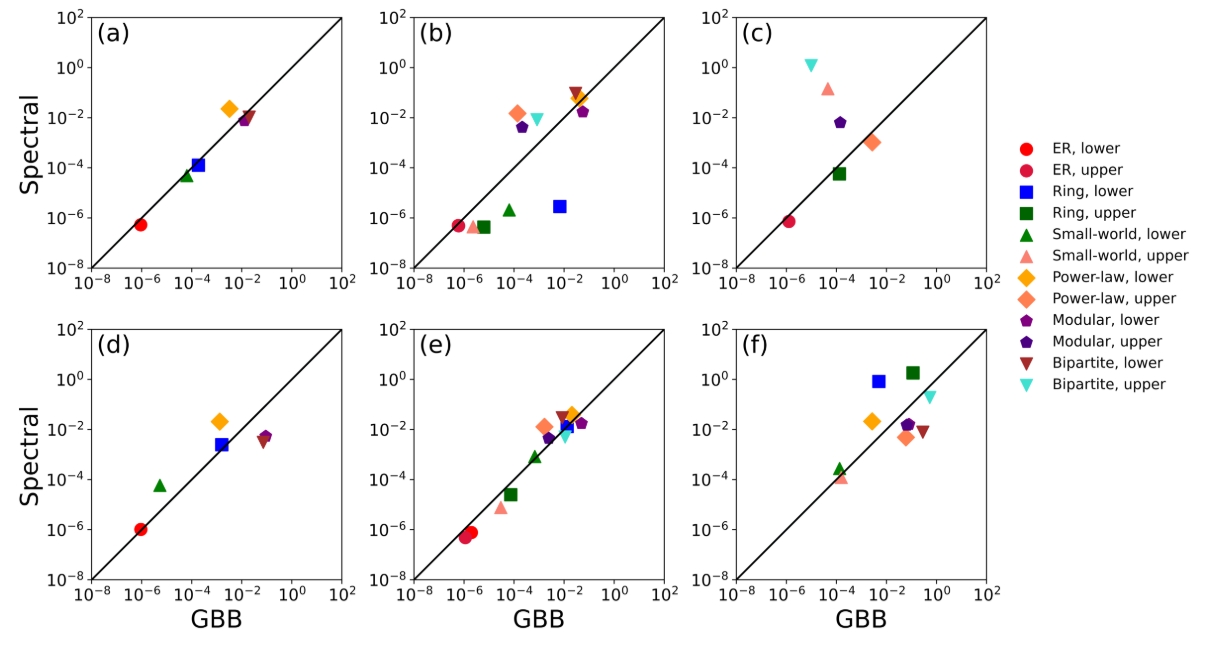}
		\caption{Comparison of the relative approximation error between the GBB and spectral reductions. (a) SIS. (b) Double-well. (c) Gene-regulatory. (d) GLV. (e) Mutualistic. (f) Wilson-Cowan. In the legend, `lower' and 'upper' represent the lower and upper initial conditions, respectively.}
		\label{fig_RE_graphon}
	\end{figure}
	
	\section{Conclusions}
	
	Motivated by various studies of dynamics on large networks, we have analyzed a class of nonlinear dynamical systems on infinitely large networks and its dimension reduction using graphons. Much of sections \ref{sec_2} and \ref{sec_3} is inspired by Ref.~\cite{medvedev2014nonlinear}, which dealt with a different class of dynamical systems on graphons. We provided different or simpler proofs wherever appropriate and pointed out the correspondence between the Galerkin approximation and the finite-network solution. We focused on dense graphons; developing similar mathematics for sparse graphons \cite{kaliuzhnyi2017semilinear, crucianelli2024interacting} warrants future work.
	
	Mathematically, we analyzed the accuracy of the GBB reduction by establishing an error bound. In numerical simulations, we used six dynamical system models and six types of graphons. We observed that the GBB and spectral reductions were reasonably accurate for most pairs of dynamics and graphon. The obtained accuracy for the graphon dynamics is comparable with that of these dimension reduction methods for finite networks examined in the literature~\cite{gao2016universal, laurence2019spectral, masuda2022dimension, kundu2022accuracy}. We also numerically found that the performance of the GBB and spectral reductions were similar on the six graphons. This last result is somewhat unexpected because the spectral reduction uses the global information about the network (i.e., the leading eigenvalue and eigenfunction of the graphon) and therefore more sophisticated than the GBB reduction, which only uses the weighted degree of each node. These results may be because the six graphons used in this study are benign, with simple $W$ functions having either high symmetry, monotonicity in terms of $y$ and $y'$, or being blockwise constant. To examine the accuracy of approximation in the case of more complicated graphons is saved for future work. 
	
	We did not attempt to mathematically show the convergence of the spectral reduction for finite networks to that for graphons. For the proof, we need the convergence of both eigenvalue and eigenvector of $A$ $(= W_N)$ to the eigenvalue and eigenfunction, respectively, of $W$.
	The convergence of the eigenvalue is known \cite{borgs2012convergent}. However, to the best of our knowledge, the convergence of the eigenvector to the graphon eigenfunction is generally not known. The convergence of the eigenvector to eigenfunction was shown in Theorem $3.6$ in \cite{gong2022regular}. However, certain conditions such as uniform convergence of the graph limit as well as the compact convergence of the Riemann sum to their respective integral, which general graphons $W$ may not satisfy, need to be met for this theorem to be applied. Justification of the spectral reduction for graphon dynamical systems warrants future work.
	
	\appendix
	\section{Proof of Lemma~\ref{lemma_1}\label{sec:proof-lemma-convergence}}
	
	Let $\xi_{N}(y, t) = x_{N}(y,t) - x(y,t)$. By subtracting \eqref{eq4} from \eqref{eq:pre-graphon-dynamical-system}, using \eqref{eq:Aij-from-W}, and taking the modulus, we obtain
	\begin{align}
		& \left|\frac{\partial \xi_{N}(y,t)}{\partial t}\right| \notag\\
		=&  \left|f(x_{N}(y,t)) - f(x(y,t)) + \int_{I}W_{N}(y,y')G(x_{N}(y,t), x_{N}(y',t))\mathrm{d}y' - \int_{I}W(y,y')G(x(y,t),x(y',t))\mathrm{d}y'\right| \notag\\
		=& \left|f(x_{N}(y,t)) - f(x(y,t)) + \int_{I}W_{N}(y,y')\left[G(x_{N}(y,t), x_{N}(y',t)) - G(x(y,t),x(y',t))\right]\mathrm{d}y' \right.\notag\\
		&\left.+ \int_{I}\left[W_{N}(y,y') - W(y,y')\right] G(x(y,t),x(y',t))\mathrm{d}y'\right|.
		\label{eq:convergence-proof-1}
	\end{align}
	By multiplying both sides of \eqref{eq:convergence-proof-1} with $\left|\xi_{N}(y,t)\right|$, integrating over $I$, and applying 
	Assumptions \ref{assumption_1}, we obtain
	\begin{align}\label{eq:sup_lemma_boundedness}
		\left| \frac{1}{2}\int_{I}\frac{\partial \xi^2_{N}(y,t)}{\partial t} \mathrm{d}y\right| \le& L\left(\int_{I}\left|x_{N}(y,t) - x(y,t)\right|\left|\xi_{N}(y,t)\right| \mathrm{d}y\right. \nonumber \\
		&\left.+ \int_{I^2} \left|W_{N}(y,y')\right|\left|(x_{N}(y,t), x_{N}(y',t))- (x(y,t),x(y',t))\right|\left|\xi_{N}(y,t)\right| \mathrm{d}y\mathrm{d}y'\right) \nonumber\\
		&+ \int_{I^2}\left|\left[W_{N}(y,y') - W(y,y')\right] G(x(y,t),x(y',t))\right| \left|\xi_{N}(y,t)\right|\mathrm{d}y\mathrm{d}y',
	\end{align}
	where we remind that $L = \max\{L_f, L_G\}$. Because $\left\|W_{N}\right\|_{L^\infty(I^2)} \leq 1$, we bound the second term on the right-hand side of \eqref{eq:sup_lemma_boundedness} as
	\begin{align}\label{eq:R2}
		& \int_{I^2} \left|W_{N}(y,y') \right| \left| (x_{N}(y,t), x_{N}(y',t))- (x(y,t),x(y',t)) \right|\left|\xi_{N}(y,t)\right| \mathrm{d}y\mathrm{d}y'\nonumber\\
		\leq& \int_{I^{2}}\left|\xi_{N}(y,t) - \xi_{N}(y',t) \right|\left|\xi_{N}(y,t)\right|\mathrm{d}y\mathrm{d}y'\nonumber\\
		\leq& 2 \left\|\xi_{N}(t)\right\|_{L^2(I^2)}^2.
	\end{align} 
	By applying the Cauchy-Schwartz inequality to the third term on the right-hand side of \eqref{eq:sup_lemma_boundedness}, we obtain
	\begin{align}\label{eq:R3}
		& \int_{I^2}\left|\left[W_{N}(y,y') - W(y,y')\right] G(x(y,t),x(y',t))\right|\left|\xi_{N}(y,t)\right|\mathrm{d}y\mathrm{d}y' \nonumber\\
		\leq& {\text{ess}\sup}_{(y,y',t)\in I^2 \times [0,T]}\left|G(x(y,t),x(y',t))\right|
		\int_{I^2}\left|W_{N}(y,y') - W(y,y')\right|\left|\xi_{N}(y,t)\right|\mathrm{d}y\mathrm{d}y' \nonumber\\
		\leq& C_{1}\left\|W_{N} - W\right\|_{L^2(I^2)}\left\|\xi_{N}(t)\right\|_{L^2(I)},
	\end{align}
	where $C_{1}= {\text{ess}\sup}_{(y,y',t)\in I^2 \times [0,T]}\left|G(x(y,t),x(y',t))\right|$.
	By substituting \eqref{eq:R2} and \eqref{eq:R3} in \eqref{eq:sup_lemma_boundedness}, we obtain
	\begin{align}
		\frac{1}{2}\frac{\partial \left\|\xi_{N}(t)\right\|_{L^2(I)}^2}{\partial t} &\leq L \left(\left\|\xi_{N}(t)\right\|_{L^2(I)}^2 + 2 \left\|\xi_{N}(t)\right\|_{L^2(I)}^2\right) + C_{1}\left\|W_{N} - W\right\|_{L^2(I^2)}\left\|\xi_{N}(t)\right\|_{L^2(I)},
	\end{align}
	that is,	    
	\begin{align}\label{eq:lemma}
		\frac{\partial \left\|\xi_{N}(t)\right\|^2}{\partial t} &\leq 6L \left\|\xi_{N}(t)\right\|_{L^2(I)}^2 + 2 C_{1}\left\|W_{N} - W\right\|_{L^2(I^2)}\left\|\xi_{N}(t)\right\|_{L^2(I)}.
	\end{align}
	Consider $\phi_{\epsilon}(t) := \sqrt{\left\|\xi_{N}(t)\right\|_{L^2(I)}^2 + \epsilon}$ where $\epsilon > 0$ is an arbitrary but fixed value. Then, \eqref{eq:lemma} implies that
	\begin{align}
		\frac{\partial \phi_{\epsilon}(t)^2}{\partial t} &\leq 6L \phi_{\epsilon}(t)^2 + 2 C_{1}\left\|W_{N} - W\right\|_{L^2(I^2)}\phi_{\epsilon}(t),
	\end{align}
	which reduces to
	\begin{align}
		\frac{\partial \phi_{\epsilon}(t)}{\partial t} &\leq 3L \phi_{\epsilon}(t) +  C_{1} \left\|W_{N} - W\right\|_{L^2(I^2)}.
		\label{eq:convergence-proof-before-Gronwall}
	\end{align}
	Applying the Gr$\ddot{\text{o}}$nwall's inequality to \eqref{eq:convergence-proof-before-Gronwall}, we obtain
	\begin{align}
		\sup_{t\in[0,T]} \phi_{\epsilon}(t) &\leq  \left( \phi_{\epsilon}(0) + C_{1}\left\|W_{N} - W\right\|_{L^2(I^2)}T \right) e^{3LT}.
	\end{align}
	Because $\epsilon>0$ is arbitrary, we obtain
	\begin{align}
		\left\|\xi_{N}(t)\right\|_{L^2(I)} & \leq \left(\left\|g_{N}-g\right\|_{L^2(I)} + C_{1}\left\|W_{N} - W\right\|_{L^2(I^2)}T \right) e^{3LT},
		\label{eq:convergence-lemma-statement-replicated}
	\end{align}
	which is equivalent to \eqref{eq:convergence-lemma-statement}.	
	
	\section{Proof of Theorem~\ref{thm:convergence}}\label{sub_Sxx}
	
	First, the hypothesis of the theorem implies that
	\begin{align}\label{eq:hypothesis}
		\left\|g_{N}-g\right\|_{L^2(I)} \to 0 ~\text{as}~ N\to \infty.
	\end{align}
	Second, because $W_{N}$ is the step graphon obtained from graphon $W$, we know that $W_{N}$ converges to $W$ as $N \to \infty$, i.e.,
	\begin{align}\label{eq:step_graphon_cgs}
		\left\|W_{N}-W\right\|_{L^2(I^2)} \to 0 ~\text{as}~ N\to \infty.
	\end{align}
	By applying \eqref{eq:hypothesis} and \eqref{eq:step_graphon_cgs} in \eqref{eq:convergence-lemma-statement}, we obtain 
	\begin{align}
		\left\|\pmb{x}_{N} - \pmb{x}\right\|_{C(L^2(I),[0,T])} & \to 0 ~\text{as}~ N \to \infty,
	\end{align}
	concluding the proof.	
	
	\section{Proof of Lemma~\ref{lemma_2} \label{sup_sec: cgs 1-dimen system}}
	
	Consider the one-dimensional equation in the discrete case, \eqref{eq_46}, and the one-dimensional equation in the continnum case given by
	\begin{equation}\label{eq_46**}
		\frac{dx_\text{eff}}{dt}=f(x_\text{eff})+\beta_\text{eff}\,G(x_\text{eff},x_\text{eff}).
	\end{equation}
	Let $\overline{\xi}_{N}(t) = x_{N\text{eff}}(t) - x_\text{eff}(t)$. By subtracting \eqref{eq_46**} from \eqref{eq_46} and taking the modulus, we obtain
	\begin{align}
		& \left|	\frac{d\overline{\xi}_{N}(t)}{dt}\right| \notag\\
		=& \left| f(x_{N\text{eff}}) - f(x_\text{eff}) + \beta_{N\text{eff}}\,G(x_{N\text{eff}},x_{N\text{eff}}) - \beta_\text{eff}\,G(x_\text{eff},x_\text{eff}) \right| \notag\\
		=&  \left| f(x_{N\text{eff}}) - f(x_\text{eff}) + \beta_{N\text{eff}}\,G(x_{N\text{eff}},x_{N\text{eff}}) - \beta_\text{eff}\,G(x_{N\text{eff}},x_{N\text{eff}}) + \beta_\text{eff}\,G(x_{N\text{eff}},x_{N\text{eff}}) - \beta_\text{eff}\,G(x_\text{eff},x_\text{eff}) \right| \notag\\%
		\leq& \left|f(x_{N\text{eff}}) - f(x_\text{eff}) \right| + \left|\beta_{N\text{eff}} - \beta_\text{eff}\right|\left|G(x_{N\text{eff}},x_{N\text{eff}})\right| + \left|\beta_\text{eff}\right|\left|G(x_{N\text{eff}},x_{N\text{eff}})- G(x_\text{eff},x_\text{eff})\right|.
		\label{eq:GBB-convergence-proof-1}
	\end{align}
	By multiplying both sides of \eqref{eq:GBB-convergence-proof-1} with $\left|\overline{\xi}_{N}(t)\right|$, integrating over $[0,T]$, and applying Assumptions \ref{assumption_1}, we obtain
	\begin{align}
		\frac{1}{2}	\frac{d\left\|\overline{\xi}_{N}(t)\right\|^{2}_{L^2([0,T])}}{dt} &\leq L\left( \left\|\overline{\xi}_{N}(t)\right\|_{L^2([0,T])} + \left\|\beta_\text{eff}\right\|_{L^2([0,T])}\left\|\overline{\xi}_{N}(t)\right\|_{L^2([0,T])}\right)\nonumber\\
		+& C_{1}\left\|\beta_{N\text{eff}} - \beta_\text{eff}\right\|_{L^2([0,T])}\left\|\overline{\xi}_{N}(t)\right\|_{L^2([0,T])}.
	\end{align}
	We remind that $ L = \max\{L_{f}, L_{G}\}$ and that $C_{1}$ is an upper bound for ${\text{ess}\sup}_{t\in [0,T]} \left|G(x(t),x(t))\right|$ (see Appendix~\ref{sec:proof-lemma-convergence}).
	
	Next, we define $\overline{\phi}_{\epsilon}(t):= \sqrt{\left\|\overline{\xi}_{N}(t)\right\|^{2}_{L^2(I)}+ \epsilon}$, where $\epsilon>0$. By following the same procedure as in the proof of Lemma \ref{lemma_1}, we obtain
	\begin{equation}
		\left\|\overline{\xi}_{N}(t)\right\|_{L^2([0,T])} \leq \left(\left\|x_{N\text{eff}}(0) - x_\text{eff}(0)\right\|_{L^2([0,T])} + C_{1}\left\|\beta_{N\text{eff}} - \beta_\text{eff}\right\|_{L^2([0,T])}\right) e^{LT\left(1+ \beta_{\text{eff}}\right)}.
	\end{equation}
	By applying Remark \ref{rem:1} and \eqref{eq:1-dimen_ini}, we obtain \eqref{eq:1-dimen_cgs} as $N \to \infty$.
	
	\section{Derivation of the spectral reduction for graphon dynamical systems}\label{sec:derivation-SM-graphon}
	
	We consider a perturbation of the state $x(y,t)$ from the observable $R$ by setting $x(y,t)=R+\Delta x(y,t)$, where $\left|\Delta x(y,t)\right| \ll 1$.
	We obtain
	\begin{align}
		\dfrac{\partial R}{\partial t}
		%
		%
		&=\int_0^1a(y)\dfrac{\partial}{\partial t}x(y,t)\,dy\nonumber\\
		%
		%
		&=\int_0^1a(y) f(x(y,t))\,\mathrm{d}y + \int_0^1\int_0^1a(y)\,W(y,y')\,G(x(y,t),x(y',t))\,\mathrm{d}y'\,\mathrm{d}y.
	\end{align}
	The Taylor expansion yields
	\begin{align}
		\label{eq:spectral-method-Taylor-1}
		\int_0^1a(y) f(x(y,t))\,\mathrm{d}y&=\int_0^1a(y) f(R+\Delta x(y,t))\,\mathrm{d}y\nonumber\\
		&=\int_0^1a(y) [f(R)+Jf(R)\Delta x(y,t)+\mathcal{O}\left((\Delta x(y,t))^2\right)]\,\mathrm{d}y\nonumber\\
		&=f(R)+Jf(R) \int_0^1a(y) \Delta x(y,t)\,\mathrm{d}y+\int_0^1a(y) \mathcal{O}\left((\Delta x(y,t))^2\right)\,\mathrm{d}y,
	\end{align}
	where $Jf(R)$ is the Jacobian of $f(R)$, i.e., $Jf(R) = \frac{df}{dR}$.
	The second term on the right-hand side of \eqref{eq:spectral-method-Taylor-1} vanishes because
	\begin{align}
		\int_0^1a(y) \Delta x(y,t)\,\mathrm{d}y =& \int_0^1a(y) [x(y,t)-R]\,\mathrm{d}y \notag\\
		=& \int_{0}^{1}a(y) x(y,t) \mathrm{d}y-\int_{0}^{1}a(y)R \mathrm{d}y \notag\\
		=& R\left[1-\int_{0}^{1}a(y)\mathrm{d}y \right] = 0.
	\end{align}
	The Taylor expansion also yields
	\begin{align}
		& \int_0^1\int_0^1a(y)\,W(y,y')\,G(x(y,t),x(y',t))\,\mathrm{d}y'\,\mathrm{d}y \notag\\
		=& \int_0^1\int_0^1a(y)\,W(y,y')\,G(R+\Delta x(y,t),R+\Delta x(y',t))\,\mathrm{d}y'\,\mathrm{d}y \nonumber\\ 
		=& \int_0^1\int_0^1a(y)\,W(y,y')\, \left[G(R,R)+J_1G(R,R)\Delta x(y,t)\right.\\ \nonumber
		&\left.+J_2G(R,R)\Delta x(y',t)+\mathcal{O}\left((\Delta x(y,t))^2\right)+\mathcal{O}\left((\Delta x(y',t))^2\right) \right] \,\mathrm{d}y'\,\mathrm{d}y \nonumber\\ 
		=& G(R,R)\int_0^1\int_0^1a(y)\,W(y,y') \,\mathrm{d}y'\,\mathrm{d}y \nonumber\\  
		&+J_1G(R,R)\int_0^1\int_0^1a(y)\,W(y,y')\, \Delta x(y,t) \,\mathrm{d}y'\,\mathrm{d}y \nonumber\\
		&+J_2G(R,R)\int_0^1\int_0^1a(y)\,W(y,y')\, \Delta x(y',t) \,\mathrm{d}y'\,\mathrm{d}y \nonumber\\
		&+\int_0^1\int_0^1a(y)\,W(y,y')\, \left[\mathcal{O}\left((\Delta x(y,t))^2\right)+\mathcal{O}\left((\Delta x(y',t))^2\right)\right] \,\mathrm{d}y'\,\mathrm{d}y,
	\end{align}
	where $J_1$ and $J_2$ is the partial derivative of $G$ with respect to the first and second argument, respectively.
	Furthermore, we obtain
	\begin{align}
		\int_0^1\int_0^1a(y)\,W(y,y')\, \Delta x(y,t) \,\mathrm{d}y'\,\mathrm{d}y&=\int_0^1\int_0^1a(y)\,W(y,y')\, \left[x(y,t)-R\right] \,\mathrm{d}y'\,\mathrm{d}y\nonumber\\
		&=\int_0^1\int_0^1a(y)\,W(y,y')\,x(y,t)\,\mathrm{d}y'\,\mathrm{d}y-R\int_0^1\int_0^1a(y)\,W(y,y')\, \mathrm{d}y'\,\mathrm{d}y\nonumber\\
		&=\int_0^1a(y)\,k(y)\,x(y,t)\,\mathrm{d}y-R\,\alpha,
	\end{align}
	where $\alpha=\int_0^1\int_0^1a(y)\,W(y,y') \,\mathrm{d}y'\,\mathrm{d}y$, and
	\begin{align}
		\int_0^1\int_0^1a(y)\,W(y,y')\, \Delta x(y',t) \,\mathrm{d}y'\,\mathrm{d}y&=\int_0^1\int_0^1a(y)\,W(y,y')\, \left[x(y',t)-R\right] \,\mathrm{d}y'\,\mathrm{d}y \nonumber\\ 
		&=\int_0^1\int_0^1a(y)\,W(y,y')\,x(y',t)\,\mathrm{d}y'\,\mathrm{d}y-R\int_0^1\int_0^1a(y)\,W(y,y')\, \mathrm{d}y'\,\mathrm{d}y \nonumber\\
		&=\int_0^1\int_0^1a(y)\,W(y,y')\,x(y',t)\,\mathrm{d}y'\,\mathrm{d}y-R\,\alpha.
	\end{align}
	Therefore, we obtain
	\begin{align}\label{eq_SM-taylor}
		\dfrac{\partial R}{\partial t}=&f(R)+G(R,R)\,\alpha \nonumber\\ 
		&+J_1G(R,R)\left[\int_0^1a(y)\,k(y)\,x(y,t)\,\mathrm{d}y-R\,\alpha\right]
		+J_2G(R,R)\left[\int_0^1\int_0^1a(y)\,W(y,y')\,x(y',t)\,\mathrm{d}y'\,\mathrm{d}y-R\,\alpha\right] \nonumber\\ 
		&+\int_0^1\int_0^1a(y)\,W(y,y')\, \left[\mathcal{O}\left((\Delta x(y,t))^2\right)+\mathcal{O}\left((\Delta x(y',t))^2\right)\right] \,\mathrm{d}y'\,\mathrm{d}y.
	\end{align}
	
	For the first-order quantities in terms of $\Delta x(y,t)$ to vanish, the $J_1 G$ and $J_2 G$ terms need to vanish. Therefore, we require
	\begin{align}
		0 =& \int_0^1a(y)\,k(y)\,x(y,t)\,\mathrm{d}y-R\,\alpha \notag\\
		= & \int_0^1 x(y,t)\left[k(y)\,a(y)-\alpha a(y)\right]\,\mathrm{d}y \notag\\
		= & \int_0^1 x(y,t)\left[\int_0^1W(y,y')\,a(y)\,\mathrm{d}y'-\alpha a(y)\right]\,\mathrm{d}y
	\end{align}
	and
	\begin{align}
		0 =& \int_0^1\int_0^1a(y)\,W(y,y')\,x(y',t)\,\mathrm{d}y'\,\mathrm{d}y-R\,\alpha \notag\\
		=& \int_0^1 x(y',t)\left[\int_0^1W(y,y')\,a(y)\,\mathrm{d}y-\alpha a(y')\right]\,\mathrm{d}y'.
	\end{align}
	In other words, if we impose
	\begin{equation}
		\int_0^1W(y,y')\,a(y)\,\mathrm{d}y'=\alpha\,a(y)
		\label{eq:SM-graphon-condition-1}
	\end{equation}
	and 
	\begin{equation}
		\int_0^1W(y,y')\,a(y)\,\mathrm{d}y=\alpha\,a(y'),
		\label{eq:SM-graphon-condition-2}
	\end{equation}
	we obtain
	\begin{equation}\label{eq_49}
		\dfrac{d R}{d t}=f(R)+\alpha\,G(R,R),
	\end{equation}
	i.e., \eqref{eq:SM-original}.
	
	Because \eqref{eq:SM-graphon-condition-1} and \eqref{eq:SM-graphon-condition-2} cannot be simultaneously satisfied in general, we only impose \eqref{eq:SM-graphon-condition-2}. We also neglect the second-order terms in \eqref{eq_SM-taylor}.
	This equation implies that $\alpha$ and $a(y)$ are a paired eigenvalue and eigenfunction, respectively, of graphon $W$. Analyses for finite networks suggest that it is practically good to use the leading eigenvalue and eigenvector for the spectral reduction \cite{laurence2019spectral, thibeault2020threefold, masuda2022dimension}. Therefore, we use the leading eigenvalue and eigenvector pair of the graphon $W$ for the spectral reduction.
	
	\section*{Acknowledgments}
	
	N.M. is supported in part by the NSF under Grant No.\,DMS-2204936, in part by JSPS KAKENHI under Grants No.\,JP21H04595, No.\,23H03414, No.\,24K14840, and No.\,24K030130, and in part by Japan Science and Technology Agency (JST) under Grant No.\,JPMJMS2021. During the preparation of this manuscript, the authors used  U-M (University of Michigan) GPT 5.5 for language editing and assisting mathematical analyses. 
	\bibliographystyle{unsrt}
	\bibliography{references}
\end{document}